\documentclass[11pt,reqno]{amsart}
\usepackage{amsmath,amssymb}
\usepackage[leqno]{amsmath}
\usepackage{agor}
\usepackage{color}
\usepackage{bbm}
\usepackage{lscape}
\usepackage{tablefootnote}
\usepackage[square,numbers,sort]{natbib}
\usepackage{todonotes}
\usepackage{multirow}
\usepackage{mathtools}
\usepackage{algcompatible}
\usepackage{geometry}
\usepackage{hyperref}
 \geometry{
 a4paper,
 total={170mm,257mm},
 left=20mm,
 top=20mm,
 }

\newcommand{\leqnomode}{\tagsleft@true\let\veqno\@@leqno}
\newcommand{\reqnomode}{\tagsleft@false\let\veqno\@@eqno}

\usepackage{dsfont}
\title[An Outer Approximation Method for MICQPs]{An Outer Approximation Method for Solving Mixed-Integer Convex Quadratic Programs with Indicators}
\author{Linchuan Wei,  Simge K\"u\c{c}\"ukyavuz}

\thanks{ \noindent \hskip -5mm
L. Wei: {Department of Industrial Engineering and Management Sciences, Northwestern University, \texttt{linchuanwei2022@u.northwestern.edu}.}\\
S. K\"u\c{c}\"ukyavuz: {Department of Industrial Engineering and Management Sciences, Northwestern University, \texttt{simge@northwestern.edu}.}\\
}

\begin{document}
\maketitle

\begin{abstract}

Mixed-integer convex quadratic programs with indicator variables (MIQP) encompass a wide range of applications, from statistical learning to energy, finance, and logistics. The outer approximation (OA) algorithm has been proven efficient in solving MIQP, and the key to the success of an OA algorithm is the strength of the cutting planes employed. In this paper, we propose a new technique for deriving cutting planes for MIQP from various convex relaxations, and, as a result, we develop new OA algorithms for solving MIQP at scale. The contributions of our work are two-fold: (1) we bridge the work on the convexification of MIQP and the algorithm design to solve large-scale problems, and (2) we demonstrate through a computational study on the sparse portfolio selection problem that our algorithms give rise to significant speedups compared with the state-of-the-art methods in the literature. 
\end{abstract}

\section{Introduction}
Mixed-integer convex quadratic programs with indicators (MIQP) constitute a broad class of optimization problems formulated as:
\begin{subequations}\label{eqn:MIQP}
\begin{align}
    \min_{x, y} \quad &   y^{\top} Q y + g^{\top} y + h^{\top} x\\
    \text{s.t.} \quad &  A y \leq b \label{MIQP:const1}\\
     \text{(MIQP)} \qquad \qquad         &  C y \leq D x \label{MIQP:const2}\\
                &  y_i (1 - x_i) = 0, \quad \forall i = 1, \dots, n \label{MIQP:const3}\\
                &  x \in X \subseteq \{0,1\}^{n} \label{MIQP:const4},
\end{align}
\end{subequations}
where $Q \in \R^{n\times n}$ is a positive definite Hessian matrix, and $X$ denotes an arbitrary set over the binary variable vector $x$. The complementarity constraint \eqref{MIQP:const3} encodes the relation that the indicator variable $x_i$ activates/deactivates the associated continuous variable $y_i, i=1,\dots,n$. We also consider arbitrary linear constraints over $y$ and linear linking constraints over $x$ and $y$. Examples of constraints \eqref{MIQP:const1}--\eqref{MIQP:const2} include the nonnegativity constraints ($y_i \geq 0$) and the semi-continuity constraints ($y_i = 0$ if $x_i = 0$, and $y_i \in [l_i, u_i]$ if $x_i = 1$, which can be modeled as $l_i x_i \leq y_i \leq u_i x_i$). Examples of $X$ include the cardinality constraint, i.e., $X = X_k:=\{x \in \{0,1\}^{n} : \sum_{i = 1}^{n} x_i \leq k\},$ for a given $k<n$. MIQP finds a wide range of applications in statistical learning with sparsity \cite{Miller2002subset, Bertsimas2016, bertsimas2020sparse, MKS21,KSMW23}, portfolio optimization \cite{BD:miqp, bertsimas2022scalable}, energy production \cite{Frangioni2006}, and logistics \cite{Gunluk2010, fischetti2017redesigning}. On the other hand, MIQP is $\mathcal{NP}$-hard even when $x$ and $y$ are unconstrained \cite{CGWY:2014complexity}.

\subsection{Background and Literature review}

By introducing an auxiliary variable $\eta$, we can write problem \eqref{eqn:MIQP} equivalently as follows
\begin{align*}
    \min_{} \quad &  \eta \\
    \text{s.t.} \quad & \eta \geq y^{\top} Q y + g^{\top} y + h^{\top} x\\
                &  A y \leq b \\
                &  C y \leq Dx \\\
                &  y_i (1 - x_i) = 0, \quad \forall i =1, \dots,n \\
                &  x \in X \subseteq \{0,1\}^{n} ,
\end{align*}
and we define an epigraph accordingly
\begin{align*}
\mathcal{Z} = \{(x, y, \eta)  \in X \times \R^{n+1} \; | \; & \eta \geq y^{\top} Q y + g^{\top} y + h^{\top} x,  \\
& Ay \leq b, C y \leq D x, y_i (1 - x_i) = 0, \; \forall i = 1, \dots, n\}.
\end{align*}
Note that minimizing $\eta$ over $(x, y, \eta) \in \mathcal Z$ for some $x$ and $y$ is equivalent to minimizing over the closure of the convex hull of $\mathcal Z$ denoted as $\clconv(\mathcal Z)$. In recent years, there has been a growing interest in studying the convexification of $\mathcal Z$ via decomposition. For example, the Hessian matrix $Q$ can be decomposed into $\sum_{i} \Gamma_i + R$, where each $\Gamma_i$ exhibits some `simple' structure. Then, a convex relaxation of $\mathcal Z$ can be derived by studying the mixed-integer quadratic epigraphs where each $\Gamma_i$ is the Hessian of the associated quadratic term. Specifically, the perspective reformulation \cite{Ceria1999,Frangioni2006} is obtained when we decompose the quadratic function into a sum of one-dimensional quadratic terms and a remainder ($Q = \text{diag}(\delta) + R$ such that $\delta \geq 0$ and $R \succcurlyeq0$) and replace $\delta_i y_i^2$ with $\delta_i \frac{y_i^2}{x_i}$, and $\frac{y_i^2}{x_i}$ is known as the perspective function of $y_i^2$ \cite{Hiriart2013}.  Other convexification work considers decomposing $Q$ into rank-one matrices \cite{atamturk2019rank, wei2020convexification,wei2022ideal, gomez2018submodularity},  two-by-two matrices \cite{frangioni2018decompositions, HGA:2x2, gomez2018strong, Jeon2017}, and tridiagonal matrices \cite{liu2021graph}. In principle, $\clconv(\mathcal Z)$ can be obtained directly using the disjunctive programming approach \cite{Ceria1999, frangioni2018decompositions}; however, a straightforward disjunctive formulation requires a copy of variables for each configuration of indicators in $X$, which results in an exponential number of auxiliary variables. \citet{wei2023convex} give a more compact description of $\clconv(\mathcal Z)$ in the absence of the linear constraints \eqref{MIQP:const1}--\eqref{MIQP:const2}.  Another related strand of research concerns the convexification of the lifted set $S = \{(x, y, y^{\top}y) \in \{0,1\}^{n} \times \R^{n}\times\R^{n \times n}:y_i(1-x_i) = 0, \forall i =1, \dots,n\}$, and $S$ stems from the more general mixed-integer quadratic programs with indicators.  \citet{anstreicher2021quadratic} deliver a convex hull description for $S$ with bound constraints $0 \leq y \leq x$ when $n = 2$. \citet{de2022explicit} give the convex hull of $S$ with nonnegativity constraints for $n = 2$. Although there are many strong theoretical results on the convexification of MIQP, a stronger reformulation usually involves a larger set of auxiliary variables, more constraints, or more nonlinearity in the objective or constraints, which may hinder the branch-and-bound process.

Regarding the exact solution methods for MIQP, substantial research emerged in recent years which can be categorized into (i) branch-and-bound methods that solve a sequence of subproblems and (ii) outer-approximation (OA) algorithms that recursively solve polyhedral approximations of $\mathcal Z$. 

\vspace{1em}

\noindent \textbf{Branch-and-bound}: In the literature, various branch-and-bound methods using different reformulations have been proposed for solving MIQP (or its special cases) to optimality. \citet{BD:miqp} first proposed solving MIQP via branch-and-bound, and, instead of using indicators, their algorithm directly branches on the continuous variables by enforcing bound constraints $y_i \leq 0$ or $y_i \geq \alpha_i$. \citet{Bertsimas2016} modeled the complementarity constraints \eqref{MIQP:const3} using Specially Ordered Sets of Type 1 (SOS-1) \cite{bertsimas2005optimization} and solved the best subset selection problem by branch-and-bound. A classic modeling practice is the so-called big-M reformulation \cite{glover1975improved}, which replaces the nonconvex complementarity constraints \eqref{MIQP:const3} with the constraints $-M x_i \leq y_i \leq M x_i, \; \forall i = 1,\dots,n$. The big-M value is chosen so that the true optimal solution is not eliminated; on the other hand, the strength of the big-M reformulation relies heavily on the value of $M$, and a loose $M$ will result in a  weak relaxation of MIQP.  MIQP formulated by big-M constraints can be equivalently written as mixed-integer second-order conic programs. \citet{vielma2008lifted} solve the resulting formulations using lifted polyhedral approximations for second-order cones. Another line of branch-and-bound methods builds upon the perspective reformulation. \citet{Frangioni2006} solve MIQP using perspective reformulation and a nonlinear branch-and-bound scheme. Subsequently, \citet{Frangioni2007} and \citet{zheng2014improving} extend the work of \citet{Frangioni2006} by designing different approaches for diagonal decomposition, and \cite{frangioni2016approximated} propose an approximate perspective reformulation through a project-and-lift approach. Another line of work accelerates the branch-and-bound procedure through specialized nonlinear programming algorithms with `warm start' capability \cite{bertsimas2009algorithm, hazimeh2022sparse}.

\vspace{1em}

\noindent \textbf{Outer approximation}: The OA algorithm \cite{duran1986outer,fletcher1994solving} solves general mixed-integer nonlinear programming (MINLP) programs to optimality. It consists of recursively solving a master mixed-integer linear (MILP) approximation of MINLP and nonlinear subproblems to generate cutting planes. The advantages of OA methods compared with nonlinear branch-and-bound methods are (i) the reduced effort of solving linear programming relaxations than nonlinear programming relaxations, (ii) the ability to utilize the `warm start' capability of linear programming solvers, the heuristics of MILP solvers, and MILP cuts. On the other hand, the strength of the cutting planes employed is crucial to the success of OA methods. In early work \cite{borchers1997computational,fletcher1998numerical}, the computational results show that nonlinear branch-and-bound outperforms OA for MIQP. The initiative to redesign OA to solve MIQP faster was taken by \citet{fischetti2017redesigning}, in which an OA algorithm using the cutting planes of a perspective reformulation of the quadratic uncapacitated facility location problem \cite{Gunluk2010} was proposed. Since then, significant advances have been accelerating the OA algorithm to solve MIQP. For example, \citet{bertsimas2020sparse} propose an OA algorithm based on the perspective reformulation of the best subset selection problem with an $\ell_{2}$-norm penalty. \citet{bertsimas2022scalable} extend the work of \citet{bertsimas2020sparse} and give an OA algorithm for solving MIQP in its general form; however, their algorithm also alters the original problem by introducing an $\ell_{2}$-norm penalty; and the cutting planes there are generated by solving a quadratic programming (QP) problem of size $n$, which could be computationally expensive. \citet{friedrichprojective} remedy this issue by replacing constraints \eqref{MIQP:const1} with a quadratic penalty term into the objective, which allows the use of the more efficient cut generation scheme in \cite{bertsimas2020sparse} (for unconstrained MIQP), but their approach does not handle the linear linking constraints \eqref{MIQP:const2}. The cutting planes used in \cite{fischetti2017redesigning, bertsimas2020sparse, bertsimas2022scalable, friedrichprojective} are `projective' cuts in the $(x, \eta)$ space, and they are derived from the subgradient cuts of the marginal function of a convex envelope of $\mathcal{Z}$, i.e., a closed convex function $f_{\text{env}}$ such that $\mathcal{Z} \subset  \{(x,y,\eta) : \eta \geq f_{\text{env}}(x, y)\} = \text{epi}(f_{\text{env}})$. As a result, the master problem in the OA algorithm is a mixed-integer linear programming problem over the binary vector $x$, and an auxiliary continuous variable $\eta$ capturing the objective function value. The OA approach using projective cutting planes has several algorithmic advantages. First, the problem size is reduced from $2n$ to $n$. Second, MIQP problems often have cardinality constraints on $x$, and such sparsity can be leveraged by commercial MILP solvers.

In the present paper, we fill the gap between the convexification theory of MIQP and the design of algorithms to solve MIQP.  We develop a unifying framework to derive projective cutting planes in $(x, \eta)$ space based on different reformulations of MIQP.  As a special case, we give a formula for computing the projective cutting planes for the perspective reformulation of MIQP. Our formula is more efficient than \cite{bertsimas2020sparse} and can handle MIQP with linear linking constraints \eqref{MIQP:const2}.

\subsection{Structure}
The rest of  this paper is organized as follows:

\noindent $\bullet$ In \S \ref{sec:1}, we first lay the theoretical foundation for our approach by appealing to variational analysis. Later, we derive cutting planes for the perspective reformulation of MIQP. A key observation is that the cutting planes can be computed by solving a QP (with a size much smaller than $n$ when a strong sparsity constraint is enforced) plus a few linear algebraic operations.

\noindent $\bullet$ In \S \ref{sec:2}, we derive cutting planes based on the perspective reformulation strengthened by the conic-quadratic inequalities of rank-one quadratics \cite{atamturk2019rank}. In addition, we show rigorously that we can obtain a class of stronger cutting planes with rank-one strengthening. 

\noindent $\bullet$ In \S \ref{sec:3}, we design an OA procedure with the cutting planes derived in \S \ref{sec:1} and \S \ref{sec:2} and apply it to the sparse portfolio selection problem studied in \cite{Frangioni2006, bertsimas2022scalable}. The computational results reveal that our algorithms lead to significant speedups compared with other state-of-the-art methods in the literature.

\section{Notation}
We denote the vector of all ones by $\mathbf{1}$. For a matrix $H$, we use the upper case $H_{i}$ to denote the $i$th column of $H$ and lower case $h_{i}$ to denote the $i$th row of $H$. We use $\succcurlyeq$ to denote the partial order in the space of symmetric matrices defined via the cone of positive semidefinite matrices. For a convex set $\mathcal C$, we adopt the conventional notations $\text{cl}(\mathcal C)$ and $\text{ri}(\mathcal C)$ to denote its closure and relative interior. The closure of the convex hull of $\mathcal C$ is written as $\clconv(\mathcal C)$. For a convex function $f(x)$, its epigraph is $\text{epi}(f) = \{(x,\eta):\eta \geq f(x)\}$, and its Fenchel conjugate is $f^{*}(\mu) = \sup_{x} \{ \langle \mu, x \rangle - f(x)\}$. The effective domain of $f$ is denoted as $\text{dom}(f) = \{x \; | \; f(x) < +\infty\}$. The subdifferential set of $f$ at $x$ is written as $\partial f(x)$. We define the indicator function of set $\mathcal C$ as $\mathcal{I}_{\mathcal C}(x) = 0$ if $x \in \mathcal C$ and $+\infty$ otherwise. We define the projection operator $\text{Proj}_{x, \eta}(\mathcal C)$ as $\text{Proj}_{x, \eta}(\mathcal C) = \{(x, \eta) : (x, y, \eta) \in \mathcal C \; \text{for some} \; y\}$.

\section{Cutting Planes for Perspective Reformulation}\label{sec:1}
For an arbitrary closed convex envelope $f_{\text{env}}$ of $\mathcal{Z}$, we have, by definition, $\mathcal{Z} \subset \text{epi}(f_{\text{env}})$. It is known from convex analysis that the marginal function $\bar{f}_{\text{env}}$ obtained by partially minimizing over $y \in \R^{n}$, i.e., 
$$
\bar{f}_{\text{env}}(x) = \inf_{y} f_{\text{env}}(x, y),
$$
is also convex. 

\begin{lemma}\label{lem:new}
For any $t \in \partial \bar{f}_{\text{env}}(x)$, the following inequality 
\begin{align}\label{subgradientcut}
\tilde \eta \geq \langle t, \tilde x - x \rangle + \bar{f}_{\text{env}}(x)    
\end{align}
holds for any $(\tilde x, \tilde y, \tilde \eta) \in \clconv(\mathcal{Z})$.
\end{lemma}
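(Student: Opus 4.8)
The plan is to exploit the fact that the subgradient inequality for $\bar{f}_{\text{env}}$ holds on its effective domain, together with two structural observations: (i) the projection of $\mathcal{Z}$ (and hence of $\clconv(\mathcal{Z})$) onto the $(x,\eta)$ space is contained in the epigraph of $\bar{f}_{\text{env}}$, and (ii) the subgradient inequality, being affine in $(\tilde x, \tilde \eta)$, is preserved under taking convex combinations and closures. First I would fix $t \in \partial \bar{f}_{\text{env}}(x)$ and recall the defining property of the subdifferential: for every $\tilde x$ in $\R^{n}$ (with the inequality trivial when $\bar{f}_{\text{env}}(\tilde x) = +\infty$), one has $\bar{f}_{\text{env}}(\tilde x) \geq \langle t, \tilde x - x\rangle + \bar{f}_{\text{env}}(x)$.

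Next I would show that any $(\tilde x, \tilde y, \tilde \eta) \in \mathcal{Z}$ satisfies \eqref{subgradientcut}. Indeed, since $\mathcal{Z} \subset \text{epi}(f_{\text{env}})$ we have $\tilde \eta \geq f_{\text{env}}(\tilde x, \tilde y) \geq \inf_{y} f_{\text{env}}(\tilde x, y) = \bar{f}_{\text{env}}(\tilde x)$, and chaining this with the subgradient inequality from the previous step gives $\tilde \eta \geq \langle t, \tilde x - x\rangle + \bar{f}_{\text{env}}(x)$. So the closed halfspace $H = \{(\tilde x, \tilde y, \tilde \eta) : \tilde \eta \geq \langle t, \tilde x - x\rangle + \bar{f}_{\text{env}}(x)\}$ contains $\mathcal{Z}$. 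Since $H$ is convex and closed, it contains $\clconv(\mathcal{Z})$, which is exactly the claim. The only point requiring care is the case $\bar{f}_{\text{env}}(x) = -\infty$ or $t$ failing to exist; but $t \in \partial \bar{f}_{\text{env}}(x)$ presupposes $\bar{f}_{\text{env}}(x)$ is finite, so the inequality defining $H$ is a genuine (proper) affine inequality and the argument goes through.

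I do not expect a serious obstacle here; the statement is essentially the observation that a valid linear inequality for a set is valid for its closed convex hull, combined with the fact that partial minimization of a convex function stays convex (already invoked in the text) so that $\partial \bar{f}_{\text{env}}(x)$ makes sense. The one subtlety worth a sentence is that $\bar{f}_{\text{env}}$ as defined by an infimum need not be closed (lower semicontinuous) in general, so $\text{epi}(\bar{f}_{\text{env}})$ may not be closed; however, this does not affect the argument, since we never need $\bar{f}_{\text{env}}$ itself to be closed — we only use that the single affine inequality \eqref{subgradientcut}, valid on $\text{epi}(f_{\text{env}}) \supset \mathcal{Z}$, defines a closed convex set and therefore remains valid on $\clconv(\mathcal{Z})$.
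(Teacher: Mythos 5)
Your proposal is correct and follows essentially the same route as the paper: both rest on the inclusion $\mathcal{Z} \subset \text{epi}(f_{\text{env}})$, the inequality $\tilde\eta \geq f_{\text{env}}(\tilde x,\tilde y) \geq \bar f_{\text{env}}(\tilde x)$, and the subgradient inequality for the convex marginal function. The only cosmetic difference is the order of operations — you verify validity on $\mathcal{Z}$ and then pass to $\clconv(\mathcal{Z})$ via the closed half-space, while the paper projects $\clconv(\mathcal{Z})$ into $\text{epi}(\bar f_{\text{env}})$ directly — and your remark that closedness of $\bar f_{\text{env}}$ is never needed is a fair observation.
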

\begin{proof}
Because $\bar{f}_{\text{env}}$ is a convex function, the subgradient inequality \eqref{subgradientcut} is valid for $\text{epi}(\bar{f}_{\text{env}})$. Moreover, we have the sequence of inclusion relations that $\text{Proj}_{x, \eta}(\clconv(\mathcal{Z})) \subset \text{Proj}_{x, \eta}(\text{epi}(f_{\text{env}})) \subset \text{epi}(\bar{f}_{\text{env}})$. The first inclusion follows from the fact that $\mathcal{Z} \subset \text{epi}(f_{\text{env}})$ and $\clconv(\mathcal{Z}) \subset \text{epi}(f_{\text{env}})$ because $\text{epi}(f_{\text{env}})$ is a convex set. For any $y$, $\eta \geq f_{\text{env}}(x, y)\ge \bar{f}_{\text{env}}(x)= \inf_{y \in \R^{n}} f_{\text{env}}(x, y)$, 
 by definition, thus the second inclusion holds. Based on the inclusion relations, we can see that the subgradient inequality \eqref{subgradientcut} is valid for $\text{Proj}_{x, \eta}(\clconv(\mathcal{Z}))$, and we write the projection in an explicit form as $\text{Proj}_{x, \eta}(\clconv(\mathcal{Z})) = \{(x,\eta): \; (x, y, \eta) \in \clconv(\mathcal Z) \; \text{for some} \; y \in \R^{n} \}$. Now, it becomes clear that the subgradient inequality is also valid for $\clconv(\mathcal{Z})$.  
\end{proof}

One insight that can be drawn from Lemma ~\ref{lem:new} is that we can construct projective cutting planes for $\mathcal{Z}$ using the subgradients of $\bar{f}_{\text{env}}$. Our goal is to design a unifying procedure for computing the subgradients regardless of the explicit form of $\bar{f}_{\text{env}}$, so that we can derive a rich family of projective cutting planes via various convex envelopes available in the literature. Toward this end, we will borrow tools from variational analysis. 

In what follows, for a proper convex function $f$ whose definition is not clear over the relative boundary of $\text{dom}(f)$, we assume that we naturally extend it to $\text{cl}(\text{dom}(f))$ by the limiting argument
\begin{equation*}
f(x) = \lim_{\lambda \to 0^{+}} f(x + \lambda(\tilde x - x)) \quad \quad \text{for some $\tilde x \in \text{ri}(\text{dom}(f))$}.
\end{equation*}
For example, for $\phi(x, y) = \frac{y^2}{x}$,
\begin{equation*}
\phi(x, y) = \left\{\begin{array}{cc}
   \frac{y^2}{x}  & \text{if $x > 0$}  \\
   0  & \text{if $y = x = 0$} \\
   +\infty & \text{otherwise.}
\end{array}
\right.
\end{equation*}

We also assume that problem \eqref{eqn:MIQP} is feasible. 
\begin{assumption}\label{assumption:1}
There exists $(\tilde x, \tilde y)$ satisfying constraints \eqref{MIQP:const1}--\eqref{MIQP:const4}.
\end{assumption}

Checking the feasibility of the system \eqref{MIQP:const1}--\eqref{MIQP:const4} is $\mathcal{NP}$-hard in general because it includes the feasibility problem of a mixed-integer linear set as a special case. However, for many interesting applications, i.e., sparse regression, best subset selection, and sparse portfolio selection, it is without loss of generality to assume that the MIQP problem always has a feasible point. 
\begin{lemma}\label{lem:1}
For $f(x, y) = \frac{y^2}{x} \; : \R^{+} \times \R \to \R^{+}$, its subdifferential $\partial f(x, y)$ is as follows

\begin{equation*}
\partial f(x, y) = \left\{\begin{array}{cc}
   \{(\Phi, \Psi) \; | \; \Phi \leq -\frac{1}{4}\Psi^2\}   & \text{if} \;  x = 0 \; \text{and} \; y = 0\\
    (- \frac{y^2}{x^2}, \frac{2y}{x}, ) & \text{if} \; x >0 \\
    \emptyset & \text{otherwise.}
\end{array}
\right.
\end{equation*}
\end{lemma}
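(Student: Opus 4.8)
The plan is to compute the subdifferential $\partial f(x,y)$ of the perspective function $f(x,y) = y^2/x$ on $\R^+ \times \R$ by splitting into the interior and boundary cases, since $f$ is a classical convex function and its subdifferential is well understood away from the boundary of its domain.

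\textbf{Interior case ($x > 0$).} Here $f$ is differentiable at $(x,y)$, so $\partial f(x,y)$ is the singleton $\{\nabla f(x,y)\}$, and a routine computation gives $\nabla f(x,y) = \bigl(-\tfrac{y^2}{x^2}, \tfrac{2y}{x}\bigr)$. This is the easy part.

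\textbf{Boundary point $(0,0)$.} I would use the defining inequality: $(\Phi,\Psi) \in \partial f(0,0)$ if and only if $f(x,y) \geq f(0,0) + \Phi x + \Psi y = \Phi x + \Psi y$ for all $(x,y)$ in $\text{dom}(f)$, i.e., for all $x > 0$ and all $y \in \R$ (the case $x = y = 0$ being trivial). So I need $\tfrac{y^2}{x} \geq \Phi x + \Psi y$ for all $x > 0$, $y \in \R$. Multiplying through by $x > 0$, this is equivalent to $y^2 - \Psi x y - \Phi x^2 \geq 0$ for all $x > 0$ and all $y$, which is a homogeneous quadratic form in $(x,y)$; by scaling it must hold for all $(x,y) \in \R^2$. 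Viewing it as a quadratic in $y$ (for fixed $x > 0$), nonnegativity for all $y$ forces the discriminant to be nonpositive: $\Psi^2 x^2 + 4\Phi x^2 \leq 0$, i.e., $\Phi \leq -\tfrac{1}{4}\Psi^2$. Conversely, if $\Phi \leq -\tfrac14\Psi^2$ then the discriminant is $\leq 0$ and the inequality holds. This yields exactly $\partial f(0,0) = \{(\Phi,\Psi) : \Phi \leq -\tfrac14 \Psi^2\}$. One should double-check that this set is nonempty (e.g.\ $(0,0) \in \partial f(0,0)$), confirming $f$ is subdifferentiable at the origin.

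\textbf{Remaining boundary points ($x = 0$, $y \neq 0$).} At such points $f(0,y) = +\infty$ by the limiting extension given in the text, so the point is not in $\text{dom}(f)$; hence $\partial f = \emptyset$ there. (Alternatively, even treating $f(0,y)$ via liminf, no affine minorant can be finite at $(0,y)$ while staying below $y^2/x \to \infty$ appropriately along $x \downarrow 0$; the cleanest argument is simply that a subgradient exists only at points of the effective domain, and $(0,y)$ with $y \neq 0$ lies outside it.)

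I expect the main obstacle to be the careful handling of the boundary case $(0,0)$: one must be attentive that the subgradient inequality need only be verified over $\text{dom}(f)$, reduce the rational inequality to a homogeneous quadratic form by clearing the positive denominator, and correctly pass to the discriminant condition in both directions. The interior case and the $x=0,\,y\neq 0$ case are essentially immediate once the domain convention from the preceding paragraph in the text is invoked.
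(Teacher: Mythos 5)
Your proof is correct, but it takes a more elementary route than the paper. The paper establishes the lemma via Fenchel conjugacy: it computes $f^{*}(\Phi,\Psi)=\sup_{x,y}\{\Phi x+\Psi y-\tfrac{y^2}{x}\}=\mathcal{I}_{\{(\Phi,\Psi)\,:\,\Phi\leq-\frac14\Psi^2\}}(\Phi,\Psi)$ and then invokes the equality case of the Fenchel--Young inequality, which characterizes $\partial f(x,y)$ in all three cases at once (in particular, at $(0,0)$ the condition becomes $f^{*}(\Phi,\Psi)=0$, i.e.\ $\Phi\leq-\tfrac14\Psi^2$). You instead work directly from the subgradient inequality: the interior case by differentiation, the point $(0,0)$ by clearing the positive denominator and passing to a discriminant condition on the resulting homogeneous quadratic in $(x,y)$ (with the converse direction also via the discriminant), and the points $(0,y)$, $y\neq 0$, by noting they lie outside $\mathrm{dom}(f)$ so no subgradient can exist. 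The two arguments are close relatives — the paper's conjugate computation hides essentially the same discriminant calculation inside the supremum — but yours avoids the conjugacy machinery altogether and is self-contained, while the paper's is slightly slicker in that a single identity (Fenchel--Young at equality) handles every case uniformly and also records the conjugate $f^{*}$, which is of independent use. Both are complete and correct; your handling of the domain convention and of the converse direction at $(0,0)$ is careful and matches what the lemma requires.
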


\begin{proof}
The result follows from the Fenchel-Young inequality \cite{rockafellar1970convex}:
\begin{equation*}
    f(x,y) + f^{*}(\Phi,\Psi) \geq x \Phi + y \psi
\end{equation*}
and the inequality holds at equality if and only if
\begin{equation*}
(\Phi, \Psi) \in \partial f(x, y).
\end{equation*}
The conjugate function $f^{*}(\Phi,\Psi)$, by definition, equals 
\begin{equation*}
    f^{*}(\Phi,\Psi) = \sup_{x,y} \left\{ \Phi x + \Psi y - \frac{y^2}{x}\right\} = \mathcal{I}_{\{(\Phi,\Psi)\;|\;\Phi \leq -\frac{1}{4} \Psi^2\}}(\Phi, \Psi).
\end{equation*}
Note that the perspective function $f$ is differentiable in its relative interior $x > 0$, and the subdifferential collapses to a single point, and when $x = 0$ but $y \neq 0$, $f(0, y) = +\infty$, so no subgradient exists. The only nontrivial case is when $x = 0$ and $y= 0$. By Fenchel-Young inequality, we know that $(\Phi, \Psi) \in \partial f(0,0)$ if and only if
\begin{align*}
    \mathcal{I}_{\{(\Phi,\Psi)\;|\;\Phi \leq-\frac{1}{4}\Psi^2\}}(\Phi,\Psi) = 0 \; \iff \; \Phi \leq -\frac{1}{4} \Psi^2.
\end{align*}
\end{proof}

Before we proceed with deriving the first class of cutting planes for $\mathcal Z$, we recall a result regarding the subdifferential of a marginal function from variational analysis, which is our main tool for computing projective cutting planes.
\begin{theorem}{\cite{rockafellar2009variational}}\label{thm:1}
For any proper closed convex function $f(x,y) : \R^{2n} \rightarrow \R \bigcup \{\infty\}$, 
suppose that there exists $\bar x, \bar r$ such that the level set
\begin{equation*}
    \{y \; | \; f(\bar x, y) \leq \bar r\}
\end{equation*}
is compact, then 
\begin{align*}
(\textit{i}) \quad \quad \quad  & \bar f(\cdot): \R^{n} \rightarrow \R \; \text{is finite and lower semicontinuous,} \\
(\textit{ii}) \quad \quad \quad &
\partial \bar{f}(x) = \{t \; : \; (t, 0) \in \partial f(x, y) \; \text{for some $y \in \R^{n}$}\}.
\end{align*}
\end{theorem}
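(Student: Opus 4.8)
This is the standard formula for the subdifferential of an infimal projection (marginal function) of a convex function, and I would prove it by routing everything through Fenchel conjugates, with recession-cone analysis supplying the one place where the compactness hypothesis is genuinely needed. The starting point is the identity $\bar f^{*}(t)=f^{*}(t,0)$, where $\bar f(x):=\inf_{y}f(x,y)$; this is immediate, since
\begin{equation*}
\bar f^{*}(t)=\sup_{x}\Big\{\langle t,x\rangle-\inf_{y}f(x,y)\Big\}=\sup_{x,y}\big\{\langle(t,0),(x,y)\rangle-f(x,y)\big\}=f^{*}(t,0),
\end{equation*}
and it needs nothing beyond interchanging the two suprema. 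With it in hand, part (ii) reduces to Fenchel--Young bookkeeping once part (i) is established.

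For part (i), convexity of $\bar f$ is immediate from joint convexity of $f$. Let $f^{\infty}$ denote the recession (horizon) function of the closed proper convex $f$. Nonemptiness and compactness of $\{y:f(\bar x,y)\le\bar r\}$ is equivalent to that set having recession cone $\{0\}$, which for a closed convex function is the statement that $f^{\infty}(0,d)>0$ for all $d\ne0$. Realizing $\text{epi}(f(x,\cdot))$ as the intersection of $\text{epi}(f)$ with the slice $\{x\}\times\R^{n}\times\R$ and using that recession cones commute with nonempty intersections of closed convex sets, one obtains that the recession function of the slice $y\mapsto f(x,y)$ equals $d\mapsto f^{\infty}(0,d)$ for every $x$ with $\bar f(x)<\infty$; hence each such slice has compact sublevel sets, so $\inf_{y}f(x,y)$ is attained and finite. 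In particular $\bar f>-\infty$ everywhere and $\bar f(\bar x)\le\bar r<\infty$, so $\bar f$ is proper; moreover, by the attainment just obtained, $\text{epi}(\bar f)$ is exactly the image of the closed convex set $\text{epi}(f)$ under the linear projection deleting the $y$-coordinates, and this image is closed because the only element of $\text{rec}(\text{epi}(f))$ in the kernel of that projection is the origin. Hence $\bar f$ is lower semicontinuous, with values finite throughout $\text{dom}(\bar f)$, and on all of $\R^{n}$ precisely when $\text{dom}(f)$ projects onto $\R^{n}$ (e.g.\ when a compact slice exists for every $\bar x$), which is the reading of the statement.

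For part (ii), the inclusion ``$\supseteq$'' uses no compactness: if $(t,0)\in\partial f(x,y^{*})$, then freezing the first argument at $x$ in the subgradient inequality gives $f(x,y')\ge f(x,y^{*})$ for all $y'$, so $f(x,y^{*})=\bar f(x)$, and taking the infimum over $y'$ in $f(x',y')\ge f(x,y^{*})+\langle t,x'-x\rangle$ yields $\bar f(x')\ge\bar f(x)+\langle t,x'-x\rangle$ for all $x'$, i.e.\ $t\in\partial\bar f(x)$. For ``$\subseteq$'', if $t\in\partial\bar f(x)$ then $\bar f(x)$ is finite and Fenchel--Young holds with equality, $\bar f(x)+\bar f^{*}(t)=\langle t,x\rangle$; by the conjugate identity this reads $\bar f(x)+f^{*}(t,0)=\langle t,x\rangle$, and by part (i) we may choose a minimizer $y^{*}$ with $f(x,y^{*})=\bar f(x)$, so $f(x,y^{*})+f^{*}(t,0)=\langle(t,0),(x,y^{*})\rangle$; equality in Fenchel--Young for $f$ at $\big((x,y^{*}),(t,0)\big)$ is precisely $(t,0)\in\partial f(x,y^{*})$, placing $t$ in the right-hand side.

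The step I expect to be the main obstacle is part (i): one must verify carefully that compactness of a \emph{single} sublevel set of \emph{one} slice really forces attainment of $\inf_{y}f(x,y)$ at \emph{every} point of $\text{dom}(\bar f)$ and closedness of the projected epigraph. This is where the hypothesis is used in full, via the facts that the recession function of a restriction of a closed convex function is the corresponding restriction of $f^{\infty}$, that recession cones commute with nonempty intersections, and that a linear image of a closed convex set is closed once its kernel meets the recession cone only at the origin. Everything downstream of part (i) --- the conjugate identity and both inclusions in (ii) --- is routine.
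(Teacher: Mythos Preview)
Your proof is correct. The paper does not actually prove this theorem: it cites Theorem~10.31 of Rockafellar--Wets, \emph{Variational Analysis}, and only remarks that for closed convex $f$ the boundedness of a single sublevel set $\{y:f(\bar x,y)\le\bar r\}$ already implies the uniform level boundedness (Definition~1.16 there) required by that general result. Your recession-function argument in part~(i) is precisely a direct justification of that remark --- compactness of one nonempty slice forces $f^{\infty}(0,d)>0$ for all $d\ne0$, hence every slice with $\bar f(x)<\infty$ is level-bounded --- so you have supplied the bridge the paper only asserts. For part~(ii) you take a more elementary route than the cited theorem: the conjugate identity $\bar f^{*}(t)=f^{*}(t,0)$ together with equality in Fenchel--Young handles both inclusions using only classical convex analysis, whereas Theorem~10.31 in Rockafellar--Wets is formulated for general (possibly nonconvex) functions via general subdifferentials. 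Your hedge on ``finite on all of $\R^{n}$'' is also appropriate; the cited result only yields properness and lower semicontinuity of $\bar f$, which is all the paper actually uses downstream.
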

Here, we provide a simplification of the condition of Theorem 10.31 in \cite{rockafellar2009variational} because when $f$ is a closed convex function, the boundedness of the level set $\{y \; | \; f(\bar x, y) \leq \bar r \}$ at some point $\bar x$ for some value $\bar r$  implies the uniform level boundedness (Definition 1.16 \cite{rockafellar2009variational}).

The level set boundedness condition in Theorem ~\ref{thm:1} is necessary to guarantee the validity of the subdifferential formula in Theorem ~\ref{thm:1} for the marginal function $\bar f(\cdot)$.
Next, we will see an example in which the formula does not hold in the absence of the level set boundedness condition. 

\begin{example}\label{ex:1}
We define function $f_1(x, y)\; : \R^{+} \times \R \to \R^{+}$  piecewise  as follows:
\begin{equation*}
f_1(x, y) = \left\{
\begin{array}{cc}
x \exp{(-\frac{y}{x})}     & \text{$x > 0$} \\
0    & \text{$x = 0, y = 0$} \\
0    & \text{$x = 0, y > 0$} \\
+\infty & \text{$x = 0, y < 0$}.
\end{array}
\right.
\end{equation*}
\end{example}
Note that on $\text{ri}(\text{dom}(f_1))$, $f_1$ coincides with the perspective function of a simple exponential, and on the boundary, it is not hard to check that the function values can be obtained via a limiting argument. Thus, $f_1$ is a closed convex function. It is easy to see that the level set boundedness condition does not hold for $f_1$. The marginal function $\bar f_1$ after partially minimizing over $y$ is
\begin{equation*}
\bar f_{1}(x) = 0,
\end{equation*}
which is a constant function over $[0, + \infty]$. Therefore, $\bar f_{1}$ is also a closed convex function whose subdifferential set $\partial \bar f_1(x)$ is nonempty and compact for any $x \geq 0$; however, a simple calculation reveals that $\partial f_1(x, y) = \{(\exp(-\frac{y}{x}) + \frac{y^2}{x}\exp(-\frac{y}{x}), - \exp(-\frac{y}{x})) \}$ for $x > 0$, and the second coordinate $- \exp(-\frac{y}{x})$ is always nonzero. This means there does not exist a point $y$ such that $(t, 0) \in \partial f_1(x, y)$ for any $x > 0$. Figure~\ref{fig:1}  provides a visualization of $f_1$.
\begin{figure}[h]
    \centering
    \includegraphics[width = 0.5\textwidth]{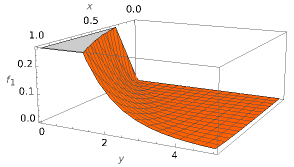}
    \caption{$f_{1}$.}
    \label{fig:1}
\end{figure}

We introduce the notations $\mathit{F}_{1}$ and $\mathit{F}_{2}$ to denote the sets of feasible points of the linear inequalities \eqref{MIQP:const1} and \eqref{MIQP:const2}, respectively, i.e., $\mathit{F}_{1} = \{ y: A y \leq b\}$ and $\mathit{F}_{2} = \{(x,y): Cy \leq D x\}$.  
Starting from the perspective reformulation, we first give a corresponding convex envelope function of $\mathcal Z$. Suppose we decompose $Q$ as $Q = \text{diag}(\delta) + R$ such that $\delta \geq 0$ and $R \succcurlyeq 0$. The perspective reformulation is as follows:
\begin{align*}
\min_{x, y} \quad & y^{\top} R y + \sum_{i = 1}^{n} \delta_i \frac{y^2}{x_i} + g^{\top} y + h^{\top} x \\
\text{s.t.} \quad & A y \leq b \\
                & C y \leq D z \\
                & x \in X.
\end{align*}
Alternatively, we can make use of the indicator function, and replace the constraints on the continuous variables with indicator functions in the objective function, i.e.
\begin{align*}
\min_{x, y} \quad & y^{\top} R y + \sum_{i = 1}^{n} \delta_i \frac{y^2}{x_i} + g^{\top} y + h^{\top} x + \mathcal{I}_{\mathit F_1}(y) + \mathcal{I}_{\mathit F_2}(x, y) \\
\text{s.t.} \quad & x \in X.
\end{align*}
Let us denote the objective function above as $f_{\text{persp}}(x,y)$. Note that the perspective relaxation is obtained when we replace the binary restriction $x \in X$ with $x \in \conv(X)$. We have the relation that  $(x, y, \eta) \in \mathcal Z \iff \eta \geq f_{\text{persp}}(x, y)$ and $x \in X$ (if $x_i$ is zero for some $i \in [n]$, then to avoid $f_{\text{persp}}$ being infinity, we must have $y_i$ equal zero as well, and the complementarity constraints are satisfied), thus $f_{\text{persp}}$ is a valid convex envelope function of $\mathcal Z$.  Here, we ignore the subtle case where $\delta_i$ may be zero for some $i \in [n]$ since it is not hard to see that $f_{\text{persp}}$  is a convex envelope function of $\mathcal{Z}$ under this scenario as well. As we have argued before, for any $x \in \conv(X)$ and  $t \in \partial \bar{f}_{\text{persp}}(x)$, the  subgradient inequality $\tilde \eta \geq \langle t, \tilde x - x \rangle + \bar{f}_{\text{persp}}(x)$ is valid for any $(\tilde x, \tilde y, \tilde \eta) \in \mathcal{Z}$. In Proposition ~\ref{prop:1}, we will characterize $\partial \bar{f}_{\text{persp}}$.
\begin{proposition}\label{prop:1}
For any $x \in [0,1]^{n}$, suppose that $S$ is the set of indices of nonzero entries of $x$, and $S^{C}$ denotes its complement. Let $A_{S}$ and $C_{S}$ denote the submatrices where the columns are in $S$. For $ y \in \R^{n}, \lambda \in \R^{m_1}$ and $\mu \in \R^{m_2}$, let $y_{i} = 0$ for $i \in S^{C}$ and  $y_S$, $\lambda$, and $\mu$ be the primal-dual optimal solution of the following quadratic program (QP) in the reduced space indexed by $S$ 
\begin{subequations}\label{prop1:QP}
\begin{align}
\min_{\tilde y \in \R^{|S|}} \quad & \tilde y^{\top} \left(R_{S} + \text{diag}\left(\left\{\frac{\delta_i}{x_i}\right\}_{i \in S}\right)\right) \tilde y + g_{S}^{\top} \tilde y \\
\text{s.t.} \quad & A_{S} \tilde y \leq b \\
            & C_{S} \tilde y \leq D x. 
\end{align}
\end{subequations}
Then $t \in \partial \bar f_{\text{persp}}( x)$ if and only if there exists $\Psi \in \R^{|S^{C}|}$ such that
\begin{align}
& t_i = - \delta_i \frac{y_i^2}{x_i^2} - \mu^{\top} D_i + h_i, \quad \forall i \in S \label{prop1:statement1}\\
& \delta_i \Psi_i = - (2R_i^{\top}  y + g_i + \lambda^{\top} A_i + \mu^{\top} C_i), \quad \forall i \in S^{C} \label{prop1:statement2}\\
& t_i \leq -\frac{1}{4} \delta_i \Psi_i^2 - \mu^{\top} D_i + h_{i}, \quad \forall i \in S^{C}. \label{prop1:statement3}
\end{align}
\end{proposition}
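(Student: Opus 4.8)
The plan is to reduce everything to Theorem~\ref{thm:1}. Once its level-boundedness hypothesis is verified for $f = f_{\text{persp}}$, that theorem gives $t \in \partial \bar f_{\text{persp}}(x)$ if and only if $(t,0) \in \partial f_{\text{persp}}(x,y)$ for some $y \in \R^n$; the rest of the proof then consists of writing this membership out coordinate by coordinate and recognizing the resulting system as the KKT system of the reduced QP \eqref{prop1:QP}.

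First I would verify the hypothesis of Theorem~\ref{thm:1}. The function $f_{\text{persp}}$ is proper, closed, and convex, as already noted. For the base point take $\bar x = \tilde x$ where $(\tilde x,\tilde y)$ is feasible by Assumption~\ref{assumption:1}; complementarity forces $\tilde y_i = 0$ whenever $\tilde x_i = 0$, so $\tilde y$ lies on the subspace $\{y : y_i = 0 \text{ for } i \notin \text{supp}(\tilde x)\}$, off which $f_{\text{persp}}(\tilde x,\cdot) \equiv +\infty$. On that subspace $y^\top R y + \sum_i \delta_i y_i^2/\tilde x_i$ restricts to a strictly convex quadratic, since $\tilde x_i \le 1$ makes its Hessian dominate the corresponding principal submatrix of $Q = \text{diag}(\delta)+R$, which is positive definite; adding the affine part and the polyhedral indicators $\mathcal I_{F_1},\mathcal I_{F_2}$ only shrinks level sets. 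Hence $\{y : f_{\text{persp}}(\tilde x,y) \le \bar r\}$ is compact for $\bar r$ large (and nonempty, containing $\tilde y$), so Theorem~\ref{thm:1}(ii) applies.

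Next I would compute $\partial f_{\text{persp}}(x,y)$. Write $f_{\text{persp}} = f_0 + \sum_{i=1}^n \delta_i\phi(x_i,y_i) + \mathcal I_{F_1} + \mathcal I_{F_2}$, where $f_0(x,y) = y^\top R y + g^\top y + h^\top x$ is smooth with gradient $(h,\,2Ry+g)$. Using the sum rule (which applies here, the polyhedrality of $\mathcal I_{F_1},\mathcal I_{F_2}$ reducing the required regularity to a mild condition that holds under Assumption~\ref{assumption:1}), $\partial f_{\text{persp}}(x,y)$ is the sum of $(h,\,2Ry+g)$, the coordinate-embedded sets $\delta_i\,\partial\phi(x_i,y_i)$ from Lemma~\ref{lem:1}, the vectors $(0,\,A^\top\lambda)$ with $\lambda \ge 0$ supported on the active rows of $Ay \le b$, and the vectors $(-D^\top\mu,\,C^\top\mu)$ with $\mu \ge 0$ supported on the active rows of $Cy \le Dx$; in particular $\partial\mathcal I_{F_1}(y)$ and $\partial\mathcal I_{F_2}(x,y)$ are nonempty only when $A_S y_S \le b$ and $C_S y_S \le Dx$ (using $y_{S^C}=0$), and the support conditions on $\lambda,\mu$ are complementary slackness. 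Now I would impose $(t,0) \in \partial f_{\text{persp}}(x,y)$ and read off coordinates via Lemma~\ref{lem:1}: for $i \in S$ (so $x_i>0$) the $\phi$-term is the single vector with $x_i$-entry $-\delta_i y_i^2/x_i^2$ and $y_i$-entry $2\delta_i y_i/x_i$, which turns the $x_i$-equation into \eqref{prop1:statement1} and the $y_i$-equation into the stationarity relation $2(R_S y_S)_i + g_i + 2(\delta_i/x_i)y_i + \lambda^\top A_i + \mu^\top C_i = 0$; for $i \in S^C$ (so $x_i=0$) finiteness of $\phi$ forces $y_i = 0$, and the $\phi$-term is any vector with $y_i$-entry $\delta_i\Psi_i$ and $x_i$-entry at most $-\tfrac14\delta_i\Psi_i^2$ for some $\Psi_i\in\R$, which gives \eqref{prop1:statement2} from the $y_i$-equation and \eqref{prop1:statement3} from the $x_i$-inequality. (The degenerate case $\delta_i = 0$ is handled by the remark preceding the proposition.)

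Finally I would observe that the conditions accumulated on the coordinates in $S$ — primal feasibility $A_S y_S \le b$, $C_S y_S \le Dx$, dual feasibility $\lambda,\mu\ge 0$, stationarity, and complementary slackness — are exactly the KKT conditions of the convex program \eqref{prop1:QP}, whose Hessian $R_S + \text{diag}(\{\delta_i/x_i\}_{i\in S})$ is positive definite by the same submatrix argument used above. Since \eqref{prop1:QP} has only linear constraints, KKT is necessary and sufficient for $(y_S,\lambda,\mu)$ to be a primal--dual optimal pair, so ``there exist $y_S,\lambda,\mu$ solving the KKT system'' can be restated as ``$(y_S,\lambda,\mu)$ is primal--dual optimal for \eqref{prop1:QP}'' ($y_S$ being unique by strict convexity); this yields both directions of the stated equivalence, and when $x \notin \text{dom}(\bar f_{\text{persp}})$ the QP is infeasible and both sides are empty. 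I expect the main obstacle to be the careful bookkeeping of the previous step: embedding the two-dimensional subgradients of Lemma~\ref{lem:1} correctly into $(x,y)$-space, keeping the roles of $S$ and $S^C$ separate, and matching the outcome term by term with the KKT system of \eqref{prop1:QP} — together with the routine but necessary justification of the sum rule and of the hypothesis of Theorem~\ref{thm:1}; each individual ingredient is a standard convex-analysis fact.
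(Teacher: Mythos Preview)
Your proposal is correct and follows essentially the same route as the paper: verify the level-set boundedness hypothesis of Theorem~\ref{thm:1} at a feasible point supplied by Assumption~\ref{assumption:1}, apply the subdifferential sum rule (the paper invokes Theorem~23.8 of Rockafellar, noting the polyhedrality of the indicator terms), unpack $(t,0)\in\partial f_{\text{persp}}(x,y)$ coordinate by coordinate using Lemma~\ref{lem:1}, and identify the $S$-block as the KKT system of \eqref{prop1:QP}. The only cosmetic difference is in the level-boundedness check: the paper bounds $f_{\text{persp}}(\tilde x,\cdot)$ below by the strongly convex quadratic $y^\top Qy+g^\top y+h^\top\tilde x$ directly, whereas you restrict to the support subspace and compare Hessians---both arguments yield the same conclusion.
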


\begin{proof}
 Given $ x\in [0,1]^{n}$, let $S$ denote the support set of $x$ (i.e., $S = \{ i \in [n] \; | \; x_i \neq 0\}$). By Assumption ~\ref{assumption:1}, there exists a feasible point $(\tilde x, \tilde y) \in X \times \R^{n}$ for \eqref{eqn:MIQP}.  We have the inequality $f_{\text{persp}}(\tilde x, y) \geq y^{\top} Q y + g^{\top} y + h^{\top} \tilde x$ since $\delta_i \frac{y_{i}^{2}}{\tilde{x}_i} \geq \delta_i y_i^{2}$ for any $\tilde{x}_i \in [0,1]$. The strongly convex quadratic function $y^{\top} Q y + a^{\top} y$ has bounded level sets, which implies that any level set of $f_{\text{persp}}(\tilde x, \cdot)$ as a function of $y$ is also bounded.  Note, in particular, that the level set $\{y \; | \; f_{\text{persp}}(\tilde x, y) \leq f_{\text{persp}}(\tilde x, \tilde y) \}$ is nonempty as it contains $\tilde y$, and it satisfies the level set boundedness condition in Theorem ~\ref{thm:1}.  Since we extend the perspective function $\frac{y_i^2}{x_i}$ to its closure, we have $\frac{y_i^2}{0} = +\infty$ if $y_i \neq 0$.  Then for the subdifferential $\partial f_{\text{persp}}(x,y)$ to exist for some $y \in \R^{n}$, we must have $y_i = 0$ for any $i \in S^{C}$ and $(x,y)$ is feasible for \eqref{eqn:MIQP} (one of the indicator functions $\mathcal{I}_{\mathit{F}_{1}}$ or $\mathcal{I}_{\mathit{F}_{2}}$ will evaluate to $+\infty$ if otherwise). 
Recall that the subdifferential of the indicator function $\mathcal I_{\mathcal C}(\cdot)$ of a closed convex set is its normal cone $\mathcal N_{\mathcal C}(\cdot)$ \cite{Hiriart2013}, and the normal cones of polyhedral sets $\mathit F_{1}$ and $\mathit F_{2}$ are:
\begin{align*}
& \mathcal N_{\mathit{F}_{1}} (y)  = \{(0, \tilde{\lambda}^{\top} A) \; | \; \tilde{\lambda} \geq 0, \tilde{\lambda}_{i} = 0, \; \forall i \text{ such that} \; \langle a_i, y \rangle < b_i\},   \\
& \mathcal N_{\mathit{F}_{2}} (x,y) = \{ (- \tilde{\mu}^{\top} D, \tilde{\mu}^{\top} C) \; | \; \tilde{\mu} \geq 0, \tilde{\mu}_{i} = 0, \; \forall i \text{ such that} \; \langle c_i, y \rangle < \langle d_i, x \rangle \}. 
\end{align*}
The indicator function of a polyhedron is a polyhedral function \cite{rockafellar1970convex},  and by Theorem 23.8 in \cite{rockafellar1970convex} (in which the constraint qualification holds), $\partial f_{\text{persp}}(\tilde x, \tilde y)$ equals the sum of the subdifferentials of the components of $f_{\text{persp}}$, i.e., $y^{\top} R y$, $\delta_i \frac{y_i^2}{x_i}$, $\mathcal{I}_{\mathit{F}_{1}}$, and $\mathcal{I}_{\mathit{F}_{2}}$.  By Theorem ~\ref{thm:1}, $t \in \partial \bar{f}_{\text{persp}}(x)$ if and only if $t$ solves the first-order equation for some $y \in \R^{n}$:
$$
(t, 0) \in \partial f_{\text{persp}}(x, y).
$$

We will reveal the equation above entry-by-entry in the order of (i) $y_i$ for $i \in S$, (ii) $y_i$ for $i \in S^{C}$, (iii) $x_i$ for $i \in S$, and (iv) $x_i$ for $i \in S^{C}$ . 

\noindent The first set of equations is for $i \in S$:
\begin{align}\label{prop1:eqn1-1}
&2R_{i}^{\top} y + 2\delta_i \frac{y_i}{x_i} + g_i + \tilde{\lambda}^{\top} A_i + \tilde{\mu}^{\top} C_i = 0. 
\end{align}
The second set of equations is that there exists $\Psi \in \R^{|S^{C}|}$, where $\Psi_i$ is the second entry of the subgradients in $\partial  \frac{y_i^2}{x_i}$ at $(0,0)$, such that for $i \in S^{C}$:
\begin{align}\label{prop1:eqn1-2}
&\delta_i \Psi_i  + 2R_i^{\top} y + g_i + \tilde{\lambda}^{\top} A_i + \tilde{\mu}^{\top} C_i = 0,
\end{align}
which is nothing but \eqref{prop1:statement2}.
The third set of equations is for $i \in S$:
\begin{align}\label{prop1:eqn1-3}
&t_i = - \delta_i \frac{ y_i^2}{x_i^2} - \tilde{\mu}^{\top} D_i + h_{i},    
\end{align}
which is nothing but \eqref{prop1:statement1}.
The last set of equations is that there exists $\Phi \in \R^{|S^{C}|}$, where $\Phi_i$ is the first entry of the subgradients in $\partial \frac{y_i^2}{x_i}$ at $(0,0)$, such that for $i \in S^{C}$:
\begin{align}\label{prop1:eqn1-4}
t_i = -  \delta_i \Phi_i - \tilde{\mu}^{\top} D_i + h_i.
\end{align}    
In equations \eqref{prop1:eqn1-1}--\eqref{prop1:eqn1-4}, by Lemma ~\ref{lem:1}, $\Psi_i, \Phi_i$ for $i \in S^{C}$ satisfy the relations:
\begin{align}\label{prop1:eqn1-5}
\Phi_i \leq - \frac{1}{4} \Psi_i^2, \quad \forall i \in S^{C}.
\end{align}

Thus far, we have argued that equations \eqref{prop1:eqn1-1}--\eqref{prop1:eqn1-5} must hold, $(x, y)$ must be feasible to \eqref{eqn:MIQP}, and that $y_i = 0$ for all $i \in S^{C}$ for $\partial f_{\text{persp}}(x, y)$ to exist. Next, we simplify these equations to obtain the desired result. Note that equations \eqref{prop1:eqn1-1} where $\tilde{\lambda}$ and $\tilde{\mu}$ satisfy the complementarity condition together with the conditions that the given $(x,  y)$ satisfies constraints \eqref{MIQP:const1}--\eqref{MIQP:const2} and $y_i = 0$ for any $i \in S^{C}$ are equivalent to the  Karush–Kuhn–Tucker (KKT) conditions of the QP problem \eqref{prop1:QP}. Hence,  $(y_{S}, \tilde \lambda, \tilde \mu)$ is the primal-dual optimal solution for the QP problem \eqref{prop1:QP}. Furthermore, we can combine equations \eqref{prop1:eqn1-4} and \eqref{prop1:eqn1-5} by plugging \eqref{prop1:eqn1-5} into \eqref{prop1:eqn1-4} which then gives \eqref{prop1:statement3}.

\end{proof}

By substituting \eqref{prop1:statement2} into \eqref{prop1:statement3}, we can see that we can choose any $t$ satisfying \eqref{prop1:statement1} and the following inequality: 
\begin{equation}\label{prop1:eqn1-6}
t_i \leq -\frac{1}{4\delta_i}(2R_i^{\top}  y  + g_i + \lambda^{\top} A_i + \mu^{\top} C_i)^2 - \mu^{\top} D_{i} + h_i, \quad \forall i \in S^{C},
\end{equation} in deriving the subgradient cuts for $\mathcal{Z}$. Nevertheless, 
the strongest cuts are those that attain equality. To see this, if we add the cut 
\begin{equation*}
   \tilde \eta \geq \langle t, \tilde x - x \rangle + \bar f_{\text{persp}}(x),
\end{equation*}
where $x_i = 0, \; \forall i \in S^{C}$, and expand it as $\tilde \eta \geq \sum_{i \in S} t_i (\tilde  x_i - x_i) + \sum_{i \in S^{C}} t_i \tilde x_i + \bar f_{\text{persp}}(x)$, then, a cut with smaller $t_i$ for $i\in S^{C}$ is implied by the one with  larger $t_i$ for $i \in S^{C}$ and the bounds $0 \geq -x_i$ for $i \in [n]$. In Algorithm ~\ref{alg:3}, we outline the procedure for obtaining the strongest cutting plane from the perspective reformulation.

\begin{algorithm}[H]
\caption{A cut-generating procedure based on the perspective reformulation ($\text{CUT}_{\text{persp}}$)} \label{alg:3}
\begin{algorithmic}[1]
\STATE \textbf{INPUT} $R, \delta, g,h, (A,b), (C,D), x$
\STATE Set $S$ to be the set of indices such that $x_i > 0$
\STATE Set $y = 0$
\STATE Set $(y_{S}, \lambda, \mu)$ to be the optimal primal-dual solution of QP 
\eqref{prop1:QP}
\STATE Set $t = 0$
\FOR{$i \in S$}
\STATE Set $t_i = - \delta_i \frac{y_i^2}{x_i^2} - \mu^{\top} D_{i} + h_i$ 
\ENDFOR
\FOR{$i \in S^{C}$}
\STATE Set $\Psi_i = - \frac{1}{\delta_i}(2\sum_{j \in S} R_{ij} y_j + g_i + \lambda^{\top} A_i + \mu^{\top} C_i)$ 
\ENDFOR
\FOR{$i \in S^{C}$}
\STATE Set $t_i = - \frac{\delta_i}{4} \Psi_i^2 - \mu^{\top} D_{i} + h_i$
\ENDFOR
\STATE \textbf{return} $t$
\end{algorithmic}
\end{algorithm}

In some cases, e.g., in the presence of bound constraints, some constraints active at $y$ may not involve $y_i$ for $i \in S$, and the corresponding multipliers $ \lambda_i$ and $ \mu_i$ do not appear in the optimal primal-dual solution of the QP \eqref{prop1:QP}. Consequently, such $\lambda_i$ and  $\mu_i$  are `free'  variables.  As we have argued before, it would be beneficial to maximize the right-hand side of \eqref{prop1:eqn1-6}; however, doing so will require solving a QP.  One simple strategy is to set these $\lambda_i$ and $\mu_i$ to zero, which is how we implement the cutting planes in the numerical experiments.

One insight we can draw from Proposition~\ref{prop:1} is that computing a cutting plane can be very efficient if only a small proportion of the entries of $x$ are nonzero. When we enforce a sparsity constraint, i.e., $X \subseteq X_{k}$, at any binary point, the computational cost equals the solution time of a $k$-dimensional quadratic program \eqref{prop1:QP} plus $\mathcal{O}(n(k + m_1 + m_2))$  floating-point operations. Next, we give a corollary of Proposition ~\ref{prop:1} for the case in which we do not have constraints over $y$.
\begin{corollary}\label{corollary:1}
For MIQP with no constraints over $y$, given $x \in [0,1]^{n}$, suppose $S$ is the set of indices of nonzero entries of $x$; for $y \in \R^{n}$, let $ y_{i} = 0$ for $i \in S^{C}$ and  $y_{S}$ be the unconstrained optimal solution in the reduced subspace indexed by $S$, i.e., $ y _{S} = -\frac{1}{2} Q_{S}^{-1} g_{S}$. Then $t \in \partial \bar f_{perps}(x)$ if and only if
\begin{align*}
    & t_i = - \delta_i \frac{ y_i^2}{x_i^2} + h_i\quad i \in S\\
    & \delta_i \Psi_i = - (2 R_i^{\top}  y + g_i) \quad \forall i \in S^{C} \\
    & t_i \leq - \frac{1}{4} \delta_i \Psi_i^2 + h_i \quad \forall i \in S^{C}.
\end{align*}
\end{corollary}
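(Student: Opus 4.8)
The plan is to obtain Corollary~\ref{corollary:1} as the special case of Proposition~\ref{prop:1} in which the linear systems \eqref{MIQP:const1} and \eqref{MIQP:const2} are absent, i.e. $m_1 = m_2 = 0$. First I would record that under this specialization the feasible sets $\mathit F_1$ and $\mathit F_2$ are all of their ambient spaces, so their normal cones reduce to $\{0\}$, and the entire construction in the proof of Proposition~\ref{prop:1} goes through verbatim with the multiplier vectors $\lambda$ and $\mu$ (and the data $A,b,C,D$) simply deleted. In particular, Assumption~\ref{assumption:1} continues to hold as soon as $X \neq \emptyset$, and the level-set boundedness hypothesis of Theorem~\ref{thm:1} is verified exactly as before using $Q \succ 0$: the inequality $f_{\text{persp}}(\tilde x, y) \ge y^\top Q y + g^\top y + h^\top \tilde x$ forces every level set of $f_{\text{persp}}(\tilde x, \cdot)$ to be bounded.

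Next I would identify the reduced quadratic program \eqref{prop1:QP}. With the constraints dropped it becomes the unconstrained, strictly convex program $\min_{\tilde y \in \R^{|S|}} \tilde y^\top\!\big(R_S + \mathrm{diag}(\{\delta_i/x_i\}_{i\in S})\big)\tilde y + g_S^\top \tilde y$; since $x_i \le 1$ we have $R_S + \mathrm{diag}(\{\delta_i/x_i\}_{i\in S}) \succcurlyeq R_S + \mathrm{diag}(\delta_S) = Q_S \succ 0$, so the KKT system collapses to the single stationarity equation and has the unique solution $y_S = -\tfrac12\big(R_S + \mathrm{diag}(\{\delta_i/x_i\}_{i\in S})\big)^{-1} g_S$. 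At the binary points $x \in X \subseteq \{0,1\}^n$ at which cutting planes are generated we have $x_i = 1$ for all $i \in S$, hence $R_S + \mathrm{diag}(\{\delta_i/x_i\}_{i\in S}) = Q_S$ and this minimizer is exactly $y_S = -\tfrac12 Q_S^{-1} g_S$, as stated.

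Finally I would substitute $\lambda = 0$, $\mu = 0$ and this $y$ into the characterization \eqref{prop1:statement1}--\eqref{prop1:statement3} of Proposition~\ref{prop:1}: condition \eqref{prop1:statement1} becomes $t_i = -\delta_i y_i^2/x_i^2 + h_i$ for $i \in S$, condition \eqref{prop1:statement2} becomes $\delta_i \Psi_i = -(2R_i^\top y + g_i)$ for $i \in S^C$, and condition \eqref{prop1:statement3} becomes $t_i \le -\tfrac14 \delta_i \Psi_i^2 + h_i$ for $i \in S^C$, which are precisely the three displayed conditions; the "if and only if" is inherited directly from Proposition~\ref{prop:1}.

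There is no genuinely hard step here, since the statement is a direct specialization. The only point needing a little care is reconciling the reduced-space minimizer with the stated formula $y_S = -\tfrac12 Q_S^{-1} g_S$: one must note that the perspective Hessian term $\mathrm{diag}(\{\delta_i/x_i\}_{i\in S})$ coincides with $\mathrm{diag}(\delta_S)$ exactly when $x$ is binary, which is the regime relevant to cut generation in the OA algorithm; alternatively the corollary can be read for general $x \in [0,1]^n$ with $Q_S$ replaced by $R_S + \mathrm{diag}(\{\delta_i/x_i\}_{i\in S})$ throughout.
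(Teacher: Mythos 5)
Your proposal is correct and takes essentially the same route as the paper, which simply specializes the proof of Proposition~\ref{prop:1} by ignoring the constraints \eqref{MIQP:const1}--\eqref{MIQP:const2} (so the multipliers $\lambda,\mu$ drop out of \eqref{prop1:statement1}--\eqref{prop1:statement3}). Your added observation that the reduced-space minimizer is in general $-\tfrac12\bigl(R_S + \mathrm{diag}(\{\delta_i/x_i\}_{i\in S})\bigr)^{-1} g_S$ and coincides with the stated $-\tfrac12 Q_S^{-1} g_S$ exactly at binary $x$ (the regime where cuts are generated) is a legitimate clarification of a point the paper leaves implicit.
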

\begin{proof}
The proof follows the same procedure as the proof of Proposition ~\ref{prop:1} by ignoring the constraints \eqref{MIQP:const1} and \eqref{MIQP:const2}. 
\end{proof}

Corollary ~\ref{corollary:1} gives a formula for computing the subgradients of $\bar f_{\text{persp}}(x)$ in the absence of constraints on $y$ for an arbitrary diagonal decomposition ($Q = R + \text{diag}(\delta)$ such that $R \succcurlyeq 0$ and $\delta \geq 0$), and it generalizes the result in \cite{bertsimas2020sparse} by allowing the remainder term $R$ to be rank-deficient. 

\citet{bertsimas2022scalable} proposed a class of cutting planes for $\mathcal Z$ when $X = X_{k}$ based on a convex Boolean reformulation of \eqref{eqn:MIQP}. Their result requires first reformulating \eqref{eqn:MIQP} equivalently as a sparse regression problem, i.e.,
\begin{subequations}\label{eqn:sparseregression}
\begin{align}
\min_{x,y} \quad & \sum_{i = 1}^{n} \delta_i y_i^2 + \| \beta - E y \|_{2}^{2} + \tilde{g}^{\top} y \\
\text{s.t.} \quad & A y \leq b \\
                  & C y \leq D x \\
                  &y_i(1 - x_i) = 0, \quad \forall i = 1,\dots,n \\
                  &x \in X_k,
\end{align}
\end{subequations}
where $R = E^{\top}E$ for some full row-rank $E \in \R^{k_1 \times n}$, $\beta = -\frac{1}{2}(EE^{\top})^{-1}E g$, and $\tilde{g} = (I - E^{\top}(EE^{\top})^{-1}E)g$, and then applying the convex Boolean reformulation akin to \cite{pilanci2015sparse, bertsimas2020sparse}.
The perspective reformulation of \eqref{eqn:sparseregression} is written as:
\begin{subequations}\label{eqn:sparsepersp}
\begin{align}
\min_{x,y} \quad  & \sum_{i = 1}^{n}  \delta_i \frac{y_i^2}{x_i} + \|r\|_{2}^{2} + \tilde{g}^{\top} y + c^{\top} x\\
\text{s.t.} \quad & r = \beta - Ey \quad \\
                  & A y \leq b \quad \quad \quad \\
                  & C y \leq D z \quad \quad \\
                  &x \in X_k.
\end{align}
\end{subequations}
We introduce another indicator function $\mathcal I_{\mathit F_{3}}(\cdot)$ with $\mathit{F}_{3} \equiv \{(y,r):r = \beta - Xy\}$, and define $$f_{\text{persp}_{2}}(x, y, r) = \sum_{i = 1}^{n} \delta_i \frac{y_i^2}{x_i} + \|r\|_{2}^{2} + \tilde{g}^{\top} y + c^{\top} x + \mathcal I_{\mathit{F}_{3}}(y, r) + \mathcal I_{\mathit{F}_{1}}(y) + \mathcal I_{\mathit{F}_{2}}(x, y).$$ 
In  Appendix \ref{app:equiv}, we prove that  $\partial \bar f_{\text{persp}_{2}}(x)$ coincides with the cutting planes in  \cite{bertsimas2022scalable} and since $f_{\text{persp}_{2}}$ only differs from $f_{\text{persp}}$  by a constant, $\partial \bar f_{\text{persp}} = \partial \bar f_{\text{persp}_{2}}$.  However, the cost of computing a cutting plane in \cite{bertsimas2022scalable} includes the cost of solving a QP of size $k_1$ plus $\mathcal O(n(k_1 + m_1 + m_2))$ floating-point operations, where $k_1$ (the rank of $R$) can be of the same order as $n$; on the contrary, the formula in Proposition ~\ref{prop:1} only requires the solution of a QP of size $k$ in addition to $\mathcal O(n(k + m_1 + m_2))$ floating-point operations. Observe that $k$ can be much smaller than $n$ in some applications. 

\begin{remark}\label{rem:6}
At any binary point $x \in X$, $\bar{f}_{\text{persp}}(x)$ is the optimal value of MIQP \eqref{eqn:MIQP} given $x$.
\end{remark}
When computing $\bar{f}_{\text{persp}}(x) = \min_{y \in \R^{n}} f_{\text{persp}}(x, y)$, to avoid the term $\delta_i \frac{y_i^2}{x_i}$ becoming $+\infty$, we must have $y_i = 0$ if $x_i = 0$ and thus the complementarity constraint \eqref{MIQP:const3} holds. On the other hand, for any $y$ satisfying the complementarity constraint \eqref{MIQP:const3},  $y^{\top} \left(\text{diag}\left(\left\{\frac{\delta_i}{x_i}\right\}_{i \in [n]}\right) + R \right) y = y_{S}^{\top} \left(\text{diag}(\delta_{S}) + R_{S} \right) y_{S} = y_{S}^{\top} Q_{S} y_{S}$ since the term $\delta_i \frac{y_i^2}{x_i} = \frac{0}{0} = 0$ for any $i \in S^{C}$. Hence, the partial minimization problem $\min_{y \in \R^{n}} f_{\text{persp}}(x, y)$ is equivalent to
\begin{align*}
\min_{\tilde y \in \R^{|S|}} \quad & \tilde y^{\top} Q_{S} \tilde y + g_{S}^{\top} \tilde{y} + h^{\top} x \\
\text{s.t.} \quad & A_{S} \tilde y \leq b \\
                & C_{S} \tilde y \leq D x,
\end{align*}
which is exactly MIQP \eqref{eqn:MIQP} given $x$. 

So far, we have seen how to derive cutting planes for $\mathcal Z$ in the space of $(x, \eta)$ using Theorem ~\ref{thm:1} and the perspective reformulation. It is also interesting to see how we can use the same framework to derive cutting planes for $\mathcal Z$ based on other reformulations in the literature. In the next section, we focus on one reformulation that uses the ideal formulation of the epigraph of a rank-one quadratic function with indicator variables and no constraints on $y$.

\section{Cutting planes for perspective reformulation with rank-one inequalities}\label{sec:2}

Let
\begin{equation*}
    X_{R1} \equiv \{(x, y, \eta) \in \{0,1\}^{n} \times \R^{n+1} \; | \; \eta \geq (y^{\top} u)^2, \; y_i (1 - x_i) = 0, \; \forall i \in [n]\}
\end{equation*}
denote the epigraph of a rank-one quadratic function with indicator variables, and $y$ is unconstrained. \citet{atamturk2019rank} show that $\clconv(X_{R1})$ is obtained by simply adding the inequality 
\begin{equation}\label{eqn:ro}
    \eta \geq \max\left\{(u^{\top} y)^2, \frac{(u^{\top} y)^2}{\sum_{i = 1}^{n} x_i}\right\},
\end{equation}
and the closure of the convex hull of $X_{R1}$ admits the following description:
\begin{equation*}
    \clconv(X_{R1}) \equiv \left\{(x, y, \eta) \in [0,1]^{n} \times \R^{n+1} \; | \; \eta \geq \max \left\{(u^{\top} y)^2, \frac{(u^{\top} y)^2}{\sum_{i = 1}^{n} x_i}\right\}\right\}.
\end{equation*}
The validity of  \eqref{eqn:ro} follows from the fact that for any $(x, y, \eta) \in X_{R1}$ if $x_i = 1$ for some $i \in [n]$, then $\eta \geq (u^{\top} y)^2 = \max\left\{(u^{\top} y)^2, \frac{(u^{\top}y)^2}{\sum_{i = 1}^{n} x_i}\right\}$, and if $x_i = 0$ for all $i \in [n]$, then $(u^{\top} y)^2 = \frac{(u^{\top}y)^2}{\sum_{i = 1}^{n} x_i} = 0$ by the complementarity constraint. The convexity of the constraint \eqref{eqn:ro} follows from the fact that its right-hand side is the maximum of two convex functions. 

Inequality \eqref{eqn:ro} is easy to implement with a decomposition of $Q$ as the sum of rank-one quadratic matrices. In what follows, we will see that the conic-quadratic inequality \eqref{eqn:ro} together with the perspective reformulation gives a class of stronger subgradient cuts than those in Proposition~\ref{prop:1}.  

Before we derive our new cutting planes, we introduce the following lemma regarding the subdifferential of the composite of a perspective function and a linear mapping. 
\begin{lemma}\label{lem:2}
Let $\phi(x, y) = \max \left\{(u^{\top} y)^2, \frac{(u^{\top} y)^2}{\sum_{i = 1}^{n} x_i} \right\}$, then $\partial \phi(x, y) = $
\tiny{
\begin{equation*}
\left\{ 
\begin{array}{cc}
 ( 0, 2(u^{\top}y)u)  &  \sum_{i = 1}^{n} x_i > 1\\
 \conv(\{(  0, 2(u^{\top}y)u), (-(u^{\top}y)^2 \mathbf{1}, 2(u^{\top}y)u) \}) & \sum_{i = 1}^{n} x_i = 1 \\
 \left(-\frac{(u^{\top}y)^2}{(\sum_{i = 1}^{n} x_i)^2} \mathbf{1}, 2\frac{1}{\sum_{i = 1}^{n} x_i}(u^{\top}y)u\right) & 0 < \sum_{i=1}^{n} x_i < 1 \\
 \{(\Phi \mathbf{1},\Psi u) :  \Phi \leq -\frac{1}{4} \Psi^2 \} & x = 0.
\end{array}
\right.
\end{equation*}
}
\end{lemma}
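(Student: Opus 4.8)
The plan is to express $\phi$ as the pointwise maximum $\phi=\max\{\phi_1,\phi_2\}$ of the two convex functions $\phi_1(x,y)=(u^\top y)^2$ and $\phi_2(x,y)=\tfrac{(u^\top y)^2}{\mathbf{1}^\top x}$, and to read off $\partial\phi$ from three ingredients: the gradient of the smooth function $\phi_1$, namely $\nabla\phi_1(x,y)=(0,\,2(u^\top y)u)$; a chain rule applied to $\phi_2=g\circ L$, where $g(s,t)=t^2/s$ is the perspective function of Lemma~\ref{lem:1} (with its variables renamed) and $L(x,y)=(\mathbf{1}^\top x,\,u^\top y)$ is linear with adjoint $L^{*}(\Phi_0,\Psi_0)=(\Phi_0\mathbf{1},\,\Psi_0 u)$; and the subdifferential calculus for pointwise maxima. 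Since $g$ is closed convex and $\phi_1$ is finite, $\phi$ is proper, closed and convex, with $\text{dom}(\phi)=\text{dom}(\phi_2)=\{(x,y):\mathbf{1}^\top x>0\}\cup\{(x,y):\mathbf{1}^\top x=0,\ u^\top y=0\}$; one may assume $u\neq 0$, as otherwise $\phi\equiv 0$ and the claim is immediate. The cases of the lemma are then governed by the position of $\mathbf{1}^\top x$ relative to the breakpoint $1$.

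First I would compute $\partial\phi_2$ by the chain rule $\partial(g\circ L)(x,y)=L^{*}\,\partial g\big(L(x,y)\big)$ (e.g.\ \cite{rockafellar1970convex}); the required qualification holds because, with $u\neq0$, $L$ has range $\R^2$, which meets $\text{ri}(\text{dom}(g))=\{(s,t):s>0\}$. Using Lemma~\ref{lem:1}: if $\mathbf{1}^\top x>0$ then $g$ is differentiable at $L(x,y)$, so $\partial\phi_2(x,y)=\big\{\big(-\tfrac{(u^\top y)^2}{(\mathbf{1}^\top x)^2}\mathbf{1},\ \tfrac{2(u^\top y)}{\mathbf{1}^\top x}u\big)\big\}$; if $\mathbf{1}^\top x=0$ and $u^\top y=0$, then $L(x,y)=(0,0)$ and $\partial\phi_2(x,y)=L^{*}\,\partial g(0,0)=\{(\Phi\mathbf{1},\,\Psi u):\Phi\le-\tfrac{1}{4}\Psi^2\}$.

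For the three rows with $\mathbf{1}^\top x>0$, I would invoke the classical max rule: near such a point $\phi$ is the maximum of the two finite convex functions $\phi_1,\phi_2$, so $\partial\phi(x,y)=\conv\{\partial\phi_i(x,y):i\in I(x,y)\}$ with $I(x,y)=\{i:\phi_i(x,y)=\phi(x,y)\}$ (e.g.\ \cite{Hiriart2013}). When $u^\top y\neq0$ one has $\phi_1<\phi_2$, $\phi_1=\phi_2$, or $\phi_1>\phi_2$ according as $\mathbf{1}^\top x<1$, $\mathbf{1}^\top x=1$, or $\mathbf{1}^\top x>1$, so the active set is $\{2\}$, $\{1,2\}$, or $\{1\}$; substituting the expressions from the previous step (and $\mathbf{1}^\top x=1$ in the middle case) gives exactly the first three rows. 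The subcase $u^\top y=0$ needs no separate treatment: both $\phi_1,\phi_2$ vanish there, and both $\nabla\phi_1$ and the displayed subgradient of $\phi_2$ reduce to $(0,0)$, consistent with the stated formulas.

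The step I expect to need the most care is the last row, $x=0$ (equivalently $\mathbf{1}^\top x=0$, with $u^\top y=0$): here the point lies on the relative boundary of $\text{dom}(\phi_2)$ and $\phi_2$ is finite on no neighbourhood of it, so the finite-valued max rule does not apply. I would instead show directly that $\partial\phi(0,y)=\partial\phi_2(0,y)$. The inclusion ``$\supseteq$'' is immediate since $\phi\ge\phi_2$ everywhere with equality at $(0,y)$. For ``$\subseteq$'', observe that $\phi_1\le\phi_2$ on $\{(x,y):\mathbf{1}^\top x\le1\}$, hence $\phi=\phi_2$ there; so any $v\in\partial\phi(0,y)$ satisfies $\phi_2(x',y')\ge\phi_2(0,y)+\langle v,(x',y')-(0,y)\rangle$ whenever $\mathbf{1}^\top x'\le1$, and for $\mathbf{1}^\top x'>1$ the same inequality follows by applying it at the boundary point $w=(1-\lambda)(0,y)+\lambda(x',y')$ with $\lambda=1/\mathbf{1}^\top x'\in(0,1)$ and using convexity of $\phi_2$ along the segment from $(0,y)$ to $(x',y')$. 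Then $\partial\phi(0,y)=\partial\phi_2(0,y)=L^{*}\partial g(0,0)=\{(\Phi\mathbf{1},\Psi u):\Phi\le-\tfrac{1}{4}\Psi^2\}$ by the chain-rule step and Lemma~\ref{lem:1}, which is the fourth row. Everything else reduces to routine substitution.
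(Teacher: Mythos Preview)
Your proposal is correct and follows essentially the same route as the paper's proof: both split into the four cases by the value of $\mathbf{1}^\top x$, invoke the max rule for subdifferentials (Theorem~4.4.2 of \cite{Hiriart2013}) in the cases with $\mathbf{1}^\top x>0$, and handle $x=0$ via the chain rule $\partial(g\circ L)=L^{*}\partial g$ together with Lemma~\ref{lem:1}. Your treatment of the boundary case $x=0$ is more explicit than the paper's one-line observation that $\phi$ is ``locally equal'' to $\phi_2$ there (indeed $\phi=\phi_2$ as extended-real-valued functions on the open set $\{\mathbf{1}^\top x<1\}$, which already forces $\partial\phi(0,y)=\partial\phi_2(0,y)$), but the substance is the same.
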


\begin{proof}

There are four cases to consider:
\begin{itemize}
\item[(i)] $\sum_{i = 1}^{n} x_i > 1$. 
In a small neighborhood of such $(x, y)$, we have $\sum_{i = 1}^{n} x_i > 1$, thus $(u^{\top} y)^2 \geq \frac{(u^{\top} y)^2}{\sum_{i = 1}^{n} x_i}$. Hence, $\phi(x, y)$ equals $ (u^{\top} y)^2$ locally. Since the subdifferential is determined by the local geometry of the epigraph of $\phi(\cdot)$, $\partial \phi(x, y) = \partial (u^{\top} y)^2$.
\item[(ii)] $0 < \sum_{i = 1}^{n} x_i < 1$. 
Similarly, when $0 < \sum_{i = 1}^{n} x_i < 1$, $\phi(x, y) = \frac{(u^{\top} y)^2}{\sum_{i = 1}^{n} x_i}$ in a small neighborhood, and $\partial \phi(x, y) = \partial \frac{(u^{\top} y)^2}{\sum_{i = 1}^{n} x_i}$ which shrinks to a single point since $\frac{(u^{\top} y)^2}{\sum_{i = 1}^{n} x_i}$ is differentiable. 
\item[(iii)] $\sum_{i = 1}^{n} x_i = 1$. In this case, then $(u^{\top} y)^2$ and $\frac{(u^{\top} y)^2}{\sum_{i = 1}^{n} x_i}$ coincide, and they are finite in a small neighborhood of $(x, y)$. Then, according to the rule of the subdifferential of the supremum of finite convex functions  (Theorem 4.4.2 \cite{Hiriart2013}), $\partial \phi(x, y) = \conv(\partial (u^{\top} y)^2 \bigcup \partial \frac{(u^{\top} y)^2}{\sum_{i = 1}^{n} x_i})$.
\item[(iv)] $x = 0$. In this case, $\phi(x, y)$ is locally equal to $\frac{(u^{\top} y)^2}{\sum_{i = 1}^{n} x_i}$, the composite of a perspective function and a linear mapping. Then, $\partial \frac{(u^{\top} y)^2}{\sum_{i = 1}^{n} x_i}$ can be derived via the chain rule of subdifferentials and Lemma ~\ref{lem:1}. 
\end{itemize}
\end{proof}

In what follows, we assume that we are given a decomposition $Q = L L^{\top} + \text{diag}(\delta) + N$ such that $\delta \geq 0$ and $N \succcurlyeq 0$, and $L \in \R^{n \times K}$ for some $1\le K\le n$ (we will discuss how to obtain such a decomposition later). Then, we can apply  inequality \eqref{eqn:ro} to the terms $(L_{i}^{\top} y)^2$ for $i \in 1,\dots,K$ and arrive at the following convex envelope function for $\mathcal Z$:
\begin{align*}
f_{\text{persp+ro}}(x,y) \equiv & \sum_{i = 1}^{K} \max \left\{(L_{i}^{\top} y)^2, \frac{(L_{i}^{\top} y)^2}{\sum_{1 \leq j \leq n, L_{ji} \neq 0} x_j}\right\} + y^{\top} N y + \sum_{i = 1}^{n} \delta_i \frac{y_i^2}{x_i} + g^{\top} y + h^{\top} x\\
& + \mathcal I_{F_{1}}(y) + \mathcal I_{F_{2}}(x, y). 
\end{align*}

\begin{proposition}
For any $x \in [0,1]^{n}$, $S$ is the set of indices of nonzero x entries, and $S^{C}$ denotes its complement. Suppose 
\begin{align*}
\scriptsize
& I^{>} = \{i:\sum_{1\leq j \leq n, L_{ji} \neq 0} x_j > 1, i = 1, \dots, K\},  \\
&  I^{=} = \{i:\sum_{1\leq j \leq n, L_{ji} \neq 0} x_j = 1, i = 1, \dots, K\},  \\
&  I^{<} = \{i:0< \sum_{1\leq j \leq n, L_{ji} \neq 0} x_j < 1, i = 1, \dots, K\}, \\
&  I^{0} = \{i:\sum_{1\leq j \leq n, L_{ji} \neq 0} x_j = 0, i = 1, \dots, K\}.
\normalsize
\end{align*}
Let $y \in \R^{n}$ ,  $y_i = 0$ for $i \in S^{C}$ , and $(y_{S},\lambda,\mu)$ be the primal-dual optimal solution of the following QP:
\begin{subequations}\label{prop2:QP}
 \begin{align}
\min_{\tilde y \in \R^{|S|}} \quad & \tilde y^{\top} \left(\sum_{i \in I^{>} \bigcup I^{=}} (L_i)_{S} (L_i)_{S}^{\top} + \sum_{i \in I^{<}} \frac{1}{\sum_{1 \leq l \leq n, L_{li} \neq 0} x_l} (L_i)_{S} (L_i)_{S}^{\top} + \text{diag}\left(\left\{\frac{\delta_i}{x_i} \right\}_{i \in S} \right) + N_{S}\right) \tilde y + g_{S}^{\top} \tilde y \\
\text{s.t.} \quad & A_{S} \tilde y \leq b \\
                & C_{S} \tilde y \leq D x,
\end{align}  
\end{subequations}

Then, $t \in \partial \bar{f}_{\text{persp+ro}}(x)$ if and only if there exist $\phi \in \R^{|I^{=}| + |I^{0}|}$ , $\psi \in \R^{|I^{0}|}$, $\Psi \in \R^{|S^{C}|}$, and $\Phi \in \R^{|S^{C}|}$ such that 
\small{
\begin{align}
 0 =&\sum_{i \in I^{>} \bigcup I^{=}} 2L_{ji}(L_i^{\top} y)  +  \sum_{i \in I^{<}} 2\frac{1}{\sum_{1 \leq j \leq n, L_{ji} \neq 0} x_j}L_{ji}(L_i^{\top} y) + \sum_{i \in I^{0}}  L_{ji} \psi_i \nonumber\\
 &+ \delta_j \Psi_j 
 + 2 N_{i}^{\top} y + g_j + \lambda^{\top} A_j + \mu^{\top} C_j = 0, \quad    \forall j \in S^{C} \label{prop2:const1}\\
 t_j = &\sum_{i \in I^{=}, L_{ji} \neq 0} \phi_i - \sum_{i \in I^{<}, L_{ji} \neq 0} \frac{(L_{i}^{\top}y)^2}{(\sum_{1\leq l \leq n, L_{li} \neq 0} x_l)^2} - \delta_j \frac{y_j^2}{x_j^2} - \tilde{\mu}^{\top} D_j + h_j,\quad  \forall j \in S \label{prop2:const2}\\
 t_{j}  = &\sum_{i \in I^{=},L_{ji} \neq 0} \phi_{i} - \sum_{i \in I^{<}, L_{ji} \neq 0} \frac{(L_{i}^{\top}y)^2}{(\sum_{1\leq l \leq n, L_{li} \neq 0} x_l)^2} + \sum_{i \in I^{0}, L_{ji} \neq 0} \phi_{i} + \delta_j \Phi_j - \mu^{\top} D_j + h_j, \quad \forall j \in S^{C} \label{prop2:const3}\\
 \phi_i \geq & -(L_{i}^{\top} y)^2, \quad   \forall i \in I^{=} \label{prop2:const4}\\
 \phi_i \leq  &- \frac{1}{4} \psi_i^2, \quad \forall i \in I^{0} \label{prop2:const5}\\
 \Phi_i \leq &- \frac{1}{4} \Psi_i^2, \quad  \forall 
i \in S^{C} 
\label{prop2:const6}.
\end{align}
}
\end{proposition}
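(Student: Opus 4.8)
The plan is to mirror the proof of Proposition~\ref{prop:1}, with Lemma~\ref{lem:2} taking over the role that Lemma~\ref{lem:1} played for the diagonal part. First I would check that Theorem~\ref{thm:1} applies to $f_{\text{persp+ro}}$: for any feasible $(\tilde x,\tilde y)$ (which exists by Assumption~\ref{assumption:1}) we have $f_{\text{persp+ro}}(\tilde x,y)\ge \sum_{i=1}^{K}(L_i^{\top}y)^2 + y^{\top}Ny + \sum_{i=1}^{n}\delta_i y_i^2 + g^{\top}y + h^{\top}\tilde x = y^{\top}Qy + g^{\top}y + h^{\top}\tilde x$, using $\max\{(L_i^{\top}y)^2,\cdot\}\ge (L_i^{\top}y)^2$, $\delta_i y_i^2/\tilde x_i\ge \delta_i y_i^2$ for $\tilde x_i\in[0,1]$, and the vanishing of the indicator terms at $\tilde y$. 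Since $Q\succ 0$, every level set of $f_{\text{persp+ro}}(\tilde x,\cdot)$ is bounded and the one through $\tilde y$ is nonempty, so the hypothesis of Theorem~\ref{thm:1} holds and $\partial\bar f_{\text{persp+ro}}(x)=\{t:(t,0)\in\partial f_{\text{persp+ro}}(x,y)\text{ for some }y\}$. As in Proposition~\ref{prop:1}, nonemptiness of $\partial f_{\text{persp+ro}}(x,y)$ forces $y_i=0$ for all $i\in S^{C}$ and $(x,y)$ feasible for \eqref{MIQP:const1}--\eqref{MIQP:const2}.

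Next I would apply the subdifferential sum rule (Theorem 23.8 in \cite{rockafellar1970convex}; the constraint qualification holds because $\mathcal I_{F_1}$ and $\mathcal I_{F_2}$ are polyhedral), so that $\partial f_{\text{persp+ro}}(x,y)$ is the Minkowski sum of the subdifferentials of its summands: the $K$ rank-one max terms, the smooth term $y^{\top}Ny$, the $n$ perspective terms $\delta_i y_i^2/x_i$, the linear part $g^{\top}y+h^{\top}x$, and the normal cones $\mathcal N_{F_1}(y)$ and $\mathcal N_{F_2}(x,y)$. For the $i$-th max term I would use Lemma~\ref{lem:2} composed, via the chain rule, with $y\mapsto L_i^{\top}y$ and the coordinate-selection map $x\mapsto (x_j)_{j:L_{ji}\neq 0}$, splitting on whether $i\in I^{>},I^{=},I^{<}$, or $I^{0}$: on $I^{>}$ and $I^{<}$ the term is locally $C^1$ (because $\sum_{j:L_{ji}\neq0}x_j>1$, resp.\ $<1$, is an open condition) and contributes only its gradient, feeding the $y$-equations and, for $i\in I^{<}$, the $x_j$-equations through $-(L_i^{\top}y)^2/(\sum_{l:L_{li}\neq0}x_l)^2$; on $I^{=}$ it contributes $\conv\{(0,2(L_i^{\top}y)L_i),(-(L_i^{\top}y)^2\mathbf 1,2(L_i^{\top}y)L_i)\}$, introducing a scalar $\phi_i\ge -(L_i^{\top}y)^2$; and on $I^{0}$ it contributes $\{(\phi_i\mathbf 1,\psi_i L_i):\phi_i\le-\tfrac14\psi_i^2\}$ by Lemma~\ref{lem:1}. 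The diagonal terms on $S^{C}$ contribute pairs $(\Phi_j,\Psi_j)$ with $\Phi_j\le-\tfrac14\Psi_j^2$, again by Lemma~\ref{lem:1}, while $y^{\top}Ny$ adds $2N_j^{\top}y$ to each $y$-equation. Writing $(t,0)\in\partial f_{\text{persp+ro}}(x,y)$ coordinate by coordinate in the order $y_j$ ($j\in S$), $y_j$ ($j\in S^{C}$), $x_j$ ($j\in S$), $x_j$ ($j\in S^{C}$): the $y_j$-equations for $j\in S$, together with complementary slackness of $(\lambda,\mu)$, primal feasibility, and $y_{S^{C}}=0$, are exactly the KKT system of the QP~\eqref{prop2:QP} — whose Hessian is the bracketed matrix precisely because each $i\in I^{>}\cup I^{=}$ contributes $(L_i)_S(L_i)_S^{\top}$, each $i\in I^{<}$ contributes $\tfrac{1}{\sum_{l:L_{li}\neq0}x_l}(L_i)_S(L_i)_S^{\top}$, the perspective part contributes $\text{diag}(\{\delta_i/x_i\}_{i\in S})$, and $N$ contributes $N_S$ — so $(y_S,\lambda,\mu)$ is its primal--dual optimum; the $y_j$-equations for $j\in S^{C}$ give \eqref{prop2:const1}; the $x_j$-equations for $j\in S$ give \eqref{prop2:const2}; the $x_j$-equations for $j\in S^{C}$ give \eqref{prop2:const3}; and the defining inclusions from Lemmas~\ref{lem:1} and \ref{lem:2} give \eqref{prop2:const4}--\eqref{prop2:const6}. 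The converse direction is then immediate: any $t$ together with the stated multipliers and auxiliary variables reassembles summand by summand into a point $(t,0)\in\partial f_{\text{persp+ro}}(x,y)$, so Theorem~\ref{thm:1} certifies $t\in\partial\bar f_{\text{persp+ro}}(x)$.

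The main obstacle I expect is the bookkeeping across the four index classes $I^{>},I^{=},I^{<},I^{0}$, together with getting the chain rule for the composite in Lemma~\ref{lem:2} right and matching the locally smooth max-terms ($i\in I^{>}\cup I^{<}$) to the Hessian and linear data of \eqref{prop2:QP} while routing the set-valued contributions ($i\in I^{=}\cup I^{0}$) to the free auxiliary variables $\phi,\psi$. Everything else — the polyhedral constraint qualification, the extension of $\delta_i y_i^2/x_i$ and of the perspective composites to their closures, and the smoothness of $y^{\top}Ny$ — is identical to the corresponding steps in the proof of Proposition~\ref{prop:1}, so I would import those verbatim rather than redo them.
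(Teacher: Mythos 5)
Your proposal is correct and follows essentially the same route as the paper's proof: verify the level-set boundedness so Theorem~\ref{thm:1} applies, use the polyhedral sum rule to split $\partial f_{\text{persp+ro}}$ into its summands, invoke Lemma~\ref{lem:2} for the four index classes $I^{>},I^{=},I^{<},I^{0}$, and unpack $(t,0)\in\partial f_{\text{persp+ro}}(x,y)$ coordinate by coordinate, identifying the $y_S$-block with the KKT system of \eqref{prop2:QP} and routing the set-valued pieces to $\phi,\psi,\Phi,\Psi$. The only cosmetic difference is that you bound $f_{\text{persp+ro}}$ below by the strongly convex quadratic directly, while the paper bounds it by $f_{\text{persp}}$ and reuses the level-set argument from Proposition~\ref{prop:1}.
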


\begin{proof}
Note that $f_{\text{persp+ro}}(x,y) \geq \sum_{i = 1}^{n} \delta_i \frac{y_i^2}{x_i} + y^{\top}(LL^{\top}) y + y^{\top} N y + g^{\top} y + h^{\top} x$, and the right-hand side can be considered as a perspective reformulation. Since we have shown in the proof of Proposition ~\ref{prop:1} that the level set boundedness condition in Theorem ~\ref{thm:1} holds for $f_{\text{persp}}$, it also holds for $f_{\text{persp+ro}}$. Then, the subdifferential of the marginal function $ \bar{f}_{\text{persp+ro}}(x)$ can be obtained by solving for the first-order condition in Theorem ~\ref{thm:1}. By the same argument as in the proof of Proposition~\ref{prop:1}, we can compute $\partial f_{\text{persp+ro}}(x, y)$ as the sum of the subdifferentials of the components of $f_{\text{persp+ro}}$.  The only difference between $f_{\text{persp+ro}}$ and $f_{\text{persp}}$ is the appearance of the term $\sum_{i=1}^{K} \max \left\{(L_{i}^{\top} y)^2, \frac{(L_{i}^{\top} y)^{2}}{\sum_{1 \leq j \leq n, L_{ji} \neq 0} x_j} \right\}$, by Lemma ~\ref{lem:2} and the calculus of subdifferentials, its subdifferential set equals
\begin{align*}
& \sum_{i \in I^{>}} (0, 2 (L_i^{\top} y)L_i)  + \\
& \sum_{i \in I^{=}} \conv \{(0, 2 (L_{i}^{\top} y)L_{i}), (\underbrace{-(L_{i}^{\top} y)^2, \dots, -(L_{i}^{\top} y)^2}_{\text{indices where} \; L_{li} \neq 0},\underbrace{0,\dots, 0}_{\text{indices where} \; L_{li} = 0}, 2(L_{i}^{\top}y)L_{i})\} + \\
& \sum_{i \in I^{<}} (\underbrace{-\frac{(L_{i}^{\top}y)^2}{(\sum_{1 \leq j \leq n, L_{ji} \neq 0} x_j)^2}, \dots, -\frac{(L_{i}^{\top}y)^2}{(\sum_{1 \leq j \leq n, L_{ji} \neq 0} x_j)^2}}_{\text{indices where} \; L_{li} \neq 0}, \underbrace{0, \dots, 0}_{\text{indices where} \; L_{li} = 0}, 2 \frac{1}{\sum_{1 \leq j \leq n, L_{ji} \neq 0} x_j}  (L_i^{\top} y)L_i) + \\
& \sum_{i \in I^{0}} \{(\underbrace{\phi_i,\dots,\phi_i }_{\text{indices where} \; L_{li} \neq 0}, \underbrace{0,\dots,0}_{\text{indices where} \; L_{li} = 0}, \psi_i L_i):\phi_i \leq - \frac{1}{4} \psi_{i}^2\}.
\end{align*}
The subdifferential set of $\max \left\{(L_{i}^{\top} y)^2, \frac{(L_{i}^{\top}y)^2}{\sum_{1 \leq j \leq n, L_{ji} \neq 0} x_{j}}\right\}$ for some $i \in I^{=}$ can be written more compactly as $\{(\underbrace{\phi_i,\dots,\phi_i}_{\text{indices where} \; L_{li} \neq 0}, \underbrace{0,\dots,0}_{\text{indices where} \; L_{li} = 0}, 2 (L_i^{\top}y) L_i):0 \geq \phi_{i} \geq -(L_i^{\top}y)^2\}$.  The subdifferentials of other terms in $f_{\text{persp+ro}}$ are the same as $f_{\text{persp}}$. Now, we state again that $t \in \partial \bar{f}_{\text{persp+ro}}(x)$  if and only if there exists $y \in \R^{n}$ such that
\begin{equation}\label{pro2:eqn1}
(t, 0) \in \partial f_{\text{persp+ro}}(x, y).
\end{equation}

 Similarly, for the subdifferential set $\partial f_{\text{persp+ro}}(x,y)$ to exist, we must have $y_i = 0$ for $i \in S^{C}$ since otherwise, $\delta_i \frac{y_i^2}{x_i} = +\infty$, and $(x, y)$ must satisfy constraints \eqref{MIQP:const1}--\eqref{MIQP:const2}. Recall that the subdifferential sets of the indicator functions $\mathcal{I}_{\mathit{F}_1}$ and  $\mathcal{I}_{\mathit{F}_{2}}$  are 
 \begin{align*}
& \mathcal N_{\mathit{F}_{1}} (y)  = \{(0, \tilde{\lambda}^{\top} A) \; | \; \tilde{\lambda} \geq 0, \tilde{\lambda}_{i} = 0, \; \forall i \text{ such that} \; \langle a_i, y \rangle < b_i\},    \\
& \mathcal N_{\mathit{F}_{2}} (x,y) = \{ (-\tilde{\mu}^{\top} D, \tilde{\mu}^{\top} C) \; | \; \tilde{\mu} \geq 0, \tilde{\mu}_{i} = 0, \; \forall i \text{ such that} \; \langle c_i, y \rangle  < \langle d_i, x \rangle \}.
\end{align*}
 Again, we uncover equation \eqref{pro2:eqn1} entry by entry in the order of (i) $y_j$ for $j \in S$, (ii) $y_j$ for $j \in S^{C}$, (iii) $x_j$ for $j \in S$, and (iv) $x_j$ for $j \in S^{C}$.  The first set of equations is for $j \in S$:
 \small{
\begin{align}\label{prop2:eqn1-1}
& \sum_{i \in I^{>} \bigcup I^{=}} 2L_{ji}(L_i^{\top} y) + \sum_{i \in I^{<}} 2\frac{1}{\sum_{1 \leq j \leq n, L_{ji} \neq 0} x_j}L_{ji}(L_i^{\top} y) + 2\delta_j \frac{y_j}{x_j} + 2 N_{i}^{\top} y + g_j + \tilde{\lambda}^{\top} A_j + \tilde{\mu}^{\top} C_j = 0.
\end{align}}
The second set of equations is for $j \in S^{C}$:
\small{
\begin{align}\label{prop2:eqn1-2}
& \sum_{i \in I^{>} \bigcup I^{=}} 2L_{ji}(L_i^{\top} y) + \sum_{i \in I^{<}} 2\frac{1}{\sum_{1 \leq j \leq n, L_{ji} \neq 0} x_j}L_{ji}(L_i^{\top} y) + \sum_{i \in I^{0}}  L_{ji} \psi_i + \delta_j \Psi_j + 2 N_{i}^{\top} y + g_j + \tilde{\lambda}^{\top} A_j + \tilde{\mu}^{\top} C_j = 0,
\end{align}}
which is exactly \eqref{prop2:const1}. The third set of equations is for $j \in S$:
\small{
\begin{align}\label{prop2:eqn1-3}
 & t_j = \sum_{i \in I^{=}, L_{ji} \neq 0} \phi_i - \sum_{i \in I^{<}, L_{ji} \neq 0} \frac{(L_{i}^{\top}y)^2}{(\sum_{1\leq l \leq n, L_{li} \neq 0} x_l)^2} - \delta_j \frac{y_j^2}{x_j^2} - \tilde{\mu}^{\top} D_j + h_j,  
\end{align}}
which is nothing but \eqref{prop2:const2}. The last set of equations is for $j \in S^{C}$:
\begin{align}\label{prop2:eqn1-4}
 &t_{j} = \sum_{i \in I^{=}, L_{ji} \neq 0} \phi_{i} - \sum_{i \in I^{<}, L_{ji} \neq 0} \frac{(L_{i}^{\top}y)^2}{(\sum_{1\leq l \leq n, L_{li} \neq 0} x_l)^2} + \sum_{i \in I^{0}, L_{ji} \neq 0} \phi_{i} + \delta_j \Phi_j - \tilde{\mu}^{\top} D_j + h_j,
\end{align}
which is equivalent to \eqref{prop2:const3}. In equations \eqref{prop2:eqn1-2}--\eqref{prop2:eqn1-4}, $\psi$, $\Psi$, $\phi$, and $\Phi$ satisfy the relations \eqref{prop2:const4}--\eqref{prop2:const6}. 
Note that equations \eqref{prop2:eqn1-1} where $\tilde{\lambda}$ and $\tilde{\mu}$ satisfy the complementarity condition together with the conditions that the given $(x,y)$ satisfies the constraints \eqref{MIQP:const1}--\eqref{MIQP:const2}  and $y_i = 0$ for all $i \in S^{C}$ are equivalent to saying that the triple $(y_S, \tilde{\lambda}, \tilde{\mu})$ satisfies the KKT condition for the QP \eqref{prop2:QP}.
\end{proof}

Now, a question to ask is how to decompose $Q$ as $\sum_{i = 1}^{K} L_i L_i^{\top} + \text{diag}(\delta) + N$? \citet{atamturk2019rank} propose a semidefinite programming (SDP) approach for decomposing $Q$ into the sum of low-dimensional rank-one quadratic functions and one-dimensional quadratic functions for the sake of maximizing the relaxation lower bound; however, the computational cost of solving the SDP problem using interior-point methods could be prohibitive. A much cheaper option is the Cholesky decomposition, and we refer interested readers to  \cite{golub2013matrix} for a complete discussion on variants of the Cholesky decomposition and their numerical stability. We can divide the task into (i) first extract a diagonal term $\text{diag}(\delta)$ from $Q$, and then (ii) do a Cholesky decomposition on the remaining positive semidefinite matrix (the Cholesky decomposition is applicable for both positive definite and positive semidefinite matrices). 

The subgradient cuts of $\bar f_{\text{persp+ro}}$ have a much simpler form at binary points.  In what follows, we will present an algorithm for computing cutting planes in this case. 

\begin{algorithm}[bht]
\caption{A cut-generating procedure based on the perspective reformulation and rank-one inequalities ($\text{CUT}_{\text{persp+ro}}$)} \label{alg:2}
\begin{algorithmic}[1]
\STATE \textbf{INPUT} $R, \delta,L,  g,h, (A,b), (C,D), x$
\STATE Set $S$ to be the set of indices such that $x_i > 0$
\STATE Set $I^{0}$ to be the set of indices such that $\sum_{1 \leq l \leq n, L_{li} \neq 0} x_l = 0$
\STATE Set $y = 0$
\STATE Set $(y_{S}, \lambda, \mu)$ to be the optimal primal-dual solution of QP 
\eqref{prop1:QP}
\STATE Set $t = 0$
\FOR{$i \in S$}
\STATE Set $t_i = - \delta_i \frac{y_i^2}{x_i^2} - \mu^{\top} D_{i} + h_i$ 
\ENDFOR
\FOR{$i \in I^{0}$}
\STATE Set $n_i = |\{j \; : \; L_{ji} \neq 0 \} |$
\ENDFOR
\FOR{$i \in S^{C}$}
\STATE Set $r_i = -2\sum_{j \in S}^{} R_{ij} y_j - g_i - \lambda^{\top} A_i - \mu^{\top} C_i$
\ENDFOR
\STATE Solve the system \small{$\left(\text{diag}\left(\left\{\delta_i\right\}_{i \in S^{C}}\right) + \sum_{i \in I^{0}} \frac{1}{n_i}(L_i)_{S^{C}} (L_i)_{S^{C}}^{\top} \right) \beta = 2r$}
\normalsize
\FOR{$i \in S^{C}$}
\STATE Set $\Psi_i = \frac{1}{2} \beta_i$
\ENDFOR
\FOR{$i \in I^{0}$}
\STATE Set $\psi_i = \frac{1}{2 n_i} \sum_{j \in S^{C}} L_{ji} \beta_j$
\ENDFOR
\FOR{$i \in S^{C}$}
\STATE Set $t_i = - \frac{\delta_i^2}{4} \Psi_i^2 - \frac{1}{4} \sum_{j \in I^{0}, L_{ij} \neq 0} \psi_j^2 - \mu^{\top} D_{i} + h_i$
\ENDFOR
\STATE \textbf{return} $t$
\end{algorithmic}
\end{algorithm}

\begin{proposition}\label{prop:2}
Given $ x \in X \subset \{0,1\}^{n}$, suppose we decompose $Q$ as $Q = \sum_{i = 1}^{K} L_i L_i^{\top} + \text{diag}(\delta) + N$ such that $\delta \geq 0$, $N \succcurlyeq 0$ and $R = \sum_{i = 1}^{K} L_i L_i^{\top} + N$. Let $t$ be the vector returned by Algorithm \ref{alg:2}, then $t \in \partial \bar f_{\text{persp+ro}}(x)$ and $\tilde \eta \geq \langle t, \tilde x -  x \rangle + \bar f_{\text{persp+ro}}(x)$ is a valid cut for $\mathcal Z$.
\end{proposition}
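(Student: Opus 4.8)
The plan is to invoke the characterization of $\partial \bar f_{\text{persp+ro}}(x)$ established above (the preceding proposition), specialized to a binary point $x\in X$, and to show that the concrete data produced by Algorithm~\ref{alg:2} — the primal–dual solution $(y_S,\lambda,\mu)$ of the QP solved in line~5 together with the auxiliary vectors $\Psi,\psi$ obtained from the linear system — is a valid choice of the parameters $(\phi,\psi,\Psi,\Phi)$ in conditions \eqref{prop2:const1}--\eqref{prop2:const6}. Once $t\in\partial\bar f_{\text{persp+ro}}(x)$ is in hand, validity of the cut $\tilde\eta\ge\langle t,\tilde x-x\rangle+\bar f_{\text{persp+ro}}(x)$ for $\mathcal Z$ follows immediately from Lemma~\ref{lem:new}, since $f_{\text{persp+ro}}$ is a closed convex envelope of $\mathcal Z$; the level-set boundedness hypothesis of Theorem~\ref{thm:1} and the additivity of $\partial f_{\text{persp+ro}}$ were already verified in the proof of the preceding proposition, so nothing new is needed there.

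First I would record what simplifies when $x$ is binary. For every $i$ the quantity $\sum_{1\le j\le n,\,L_{ji}\neq 0}x_j$ is a nonnegative integer, so $I^{<}=\emptyset$ and $\{1,\dots,K\}\setminus I^{0}=I^{>}\cup I^{=}$. For $i\in I^{0}$ every index $l$ with $L_{li}\neq 0$ has $x_l=0$, hence $(L_i)_S=0$ and, since $y_l=0$ on $S^{C}$, also $L_i^{\top}y=0$. Consequently the Hessian of the reduced QP \eqref{prop2:QP} collapses to $\sum_{i=1}^{K}(L_i)_S(L_i)_S^{\top}+\operatorname{diag}(\delta_S)+N_S=R_S+\operatorname{diag}(\delta_S)=Q_S$, so \eqref{prop2:QP} is exactly the QP \eqref{prop1:QP} that the algorithm solves. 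Using $R=\sum_iL_iL_i^{\top}+N$ and $L_i^{\top}y=0$ on $I^{0}$, the terms $\sum_{i\in I^{>}\cup I^{=}}2L_{ji}(L_i^{\top}y)+2N_j^{\top}y$ equal $2(Ry)_j$, so the $S^{C}$-stationarity \eqref{prop2:const1} rewrites as $\delta_j\Psi_j+\sum_{i\in I^{0}}L_{ji}\psi_i=r_j$ for $j\in S^{C}$, where $r_j=-2(Ry)_j-g_j-\lambda^{\top}A_j-\mu^{\top}C_j$ is precisely the vector $r$ computed in the algorithm.

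The core of the verification is then to choose the free parameters and check the six conditions. I would take $y_i=0$ on $S^{C}$ and $(y_S,\lambda,\mu)$ the primal–dual optimum of \eqref{prop1:QP}, so the $S$-block of stationarity holds by KKT. For the rank-one auxiliary variables set $\phi_i=0$ on $I^{=}$ (admissible since $0\ge-(L_i^{\top}y)^2$, so \eqref{prop2:const4} holds), and saturate \eqref{prop2:const5}--\eqref{prop2:const6} by taking $\phi_i=-\tfrac14\psi_i^2$ on $I^{0}$ and $\Phi_j=-\tfrac14\Psi_j^2$ on $S^{C}$ (which also makes $t$ componentwise as large as possible). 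The one substantive check is that the $\Psi,\psi$ returned by the linear system satisfy \eqref{prop2:const1}: substituting $\Psi=\tfrac12\beta$ and $\psi_i=\tfrac{1}{2n_i}(L_i)_{S^{C}}^{\top}\beta$ into $\delta_j\Psi_j+\sum_{i\in I^{0}}L_{ji}\psi_i$ turns it into $\tfrac12\big(\operatorname{diag}(\delta_{S^{C}})+\sum_{i\in I^{0}}\tfrac{1}{n_i}(L_i)_{S^{C}}(L_i)_{S^{C}}^{\top}\big)\beta=\tfrac12(2r)=r$, as required. With these choices \eqref{prop2:const2} collapses to $t_j=-\delta_jy_j^2/x_j^2-\mu^{\top}D_j+h_j$ on $S$ (since $I^{<}=\emptyset$ and $\phi_i=0$ on $I^{=}$), and \eqref{prop2:const3} collapses to the formula for $t_j$ on $S^{C}$ used in Algorithm~\ref{alg:2}; hence $t$ is exactly the returned vector and lies in $\partial\bar f_{\text{persp+ro}}(x)$, and Lemma~\ref{lem:new} finishes the proof.

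I expect the delicate points to be concentrated entirely in the last step: showing that the algorithm's linear system is well posed — its matrix $\operatorname{diag}(\delta_{S^{C}})+\sum_{i\in I^{0}}\tfrac{1}{n_i}(L_i)_{S^{C}}(L_i)_{S^{C}}^{\top}$ is positive definite (certainly when $\delta_j>0$ on $S^{C}$, and otherwise provided the remaining curvature covers $S^{C}$) — and verifying termwise that, after the substitutions above, the coordinate identities \eqref{prop2:const2}--\eqref{prop2:const3} reduce exactly to the assignments made in the algorithm. By contrast, the binary-point simplifications and the admissibility of the saturating choices of $(\phi,\psi,\Psi,\Phi)$ are routine consequences of Lemma~\ref{lem:2} and the preceding proposition.
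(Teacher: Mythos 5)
Your proposal is correct and follows essentially the same route as the paper: specialize the characterization \eqref{prop2:const1}--\eqref{prop2:const6} to a binary point (so $I^{<}=\emptyset$ and QP \eqref{prop2:QP} collapses to \eqref{prop1:QP}), take $\phi_i=0$ on $I^{=}$, saturate \eqref{prop2:const5}--\eqref{prop2:const6}, check that the $(\Psi,\psi)$ produced by the linear system \eqref{prop2:linearsys} satisfy the $S^{C}$-stationarity, and invoke Lemma~\ref{lem:new}. The only (harmless) difference is that you verify the linear system directly by substitution, whereas the paper derives it as the KKT system of the auxiliary problem \eqref{prop2:subprob} maximizing $\sum_{i\in S^{C}}t_i$, which additionally establishes the non-dominance of the returned cut used later in Remark~\ref{rem:5}.
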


\begin{proof}
Since $x \in X$ is binary, the index set $I^{<}$ does not exist. Thus, QP \eqref{prop2:QP} reduces to:

\begin{subequations}
\allowdisplaybreaks
\begin{align}
\min_{\tilde y \in \R^{|S|}} \quad & \tilde y^{\top} \left(\sum_{i \in I^{>} \bigcup I^{+}} (L_{i})_{S} (L_{i})_{S}^{\top} + \text{diag}\left(\left\{\frac{\delta_i}{x_i} \right\}_{i \in S}\right)+N_{S}\right) \tilde y + g_{S}^{\top} \tilde y \\
\text{s.t.} \quad & A_{S} \tilde y \leq b \\
                & C_{S} \tilde y \leq D x.
\end{align}
\end{subequations}
Since each $L_i$ for $i \in I^{0}$ is supported only on $S^{C}$, the Hessian of the objective equals $$\sum_{i \in I^{>} \bigcup I^{=} \bigcup I^{0}} (L_i)_{S} (L_{i})_{S}^{\top} + \text{diag}\left(\left\{\frac{\delta_i}{x_i} \right\}_{i \in S} \right) + N_{S},$$ which by definition is also equal to $\text{diag}\left(\left\{\frac{\delta_i}{x_i} \right\}_{i \in S} \right) + R_{S}$. If we take $\phi_i = 0$ for $i \in I^{=}$ in equations \eqref{prop2:const2}--\eqref{prop2:const4}, then equations \eqref{prop2:const1}--\eqref{prop2:const6} further simplify to 
\begin{align}
    & t_i = - \delta_i \frac{y_i^2}{x_i^2} - \mu^{\top} D_{i} + h_i, \quad \forall i \in S \label{prop2:cond7}\\
    & \delta_i \Psi_i + \sum_{j \in I^{0}, L_{ij} \neq 0} L_{ij} \psi_j = -(2 N_{i}^{\top} y + \sum_{j \in I^{>} \bigcup I^{=}} 2 L_{ij} (L_{j}^{\top} y) + g_i + \lambda^{\top} A_{i} + \mu^{\top} C_{i}), \quad \forall i \in S^{C} \label{prop2:cond8}\\
    & t_i = \delta_i \Phi_i + \sum_{j \in I^{0}, L_{ij} \neq 0} \phi_j  - \mu^{\top} D_{i} + h_i, \quad \forall i \in S^{C}\label{prop2:cond9} \\
& \phi_i \leq - \frac{1}{4} \psi_i^2, \quad \quad \quad  \forall i \in I^{0} \label{prop2:cond10}\\
& \Phi_i \leq - \frac{1}{4} \Psi_i^2, \quad \quad \quad \forall 
i \in S^{C}. \label{prop2:cond11}
\end{align}
We can substitute \eqref{prop2:cond10}--\eqref{prop2:cond11} into \eqref{prop2:cond9}, to obtain
\begin{align}\label{prop2:cond9+}
 t_i \leq -\frac{\delta_i}{4} \Psi_i^2  - \frac{1}{4} \sum_{j \in I^{0}, L_{ij} \neq 0} \psi_j^2  - \mu^{\top} D_{i} + h_i, \quad \forall i \in S^{C}.    
\end{align}

The free variables are $\Psi_i$ for $i \in S^{C}$, $\psi_i$ for $i \in I^{0}$, and $t_i$ for $i \in S^{C}$. Note that a cut $\tilde \eta \geq \langle \bar t, \tilde x- x \rangle + \bar f_{\text{persp+ro}}(x)$ `dominates' another cut $\tilde \eta \geq \langle \tilde t, \bar x - x \rangle + \bar f_{\text{persp+ro}}(x)$ if $\bar t_i \geq \tilde t_i, \ \forall i \in S^{C}$ (because the latter is implied by the former together with the bounds $-x_i \leq 0$), thus one strategy is to select $\Psi, \psi, t$ such that the summation $\sum_{i \in S^{C}} t_i$ is maximized, and this guarantees that the cut is non-dominated. Let us denote the right-hand side of \eqref{prop2:cond8} as $r_i$ for all $i \in S^{C}$. Again, since each $L_i$ for $i \in I^{0}$ is only supported on $S^{C}$, the right-hand side of \eqref{prop2:cond8} equals $-(2R_{i}^{\top} y + h_i + \lambda^{\top} A_i + \mu^{\top} C_i)$. The QP problem for maximizing $\sum_{i \in S^{C}} t_i$ is as follows:
\begin{subequations}\label{prop2:subprob}
\begin{align}
\min \quad & \sum_{i \in S^{C}} \delta_i \Psi_i^2 + \sum_{i \in I^{0}} \sum_{j \in S^{C}, L_{ji} \neq 0}^{} \psi_i^2 \\
\text{s.t.} \quad & \delta_i \Psi_i + \sum_{j \in I^{0}} L_{ij} \psi_j = r_i \quad [\beta_i] \quad \forall i \in S^{C} .
\end{align}
\end{subequations}
We associate each constraint with Lagrangian multiplier $\beta_i$, and then the KKT conditions of \eqref{prop2:subprob} are:
\begin{align}
& \Psi_i = \frac{1}{2} \beta_i, \quad \forall i \in S^{C} \label{kkt:con1}\\
& \psi_i =  \frac{\sum_{j \in S^{C}} L_{ji} \beta_j}{2 n_i}, \quad \quad \quad  \forall i \in I^{0} \label{kkt:con2}\\
& \delta_i \Psi_i + \sum_{j \in I^{0}} L_{ij} \psi_j = r_i, \quad \quad \forall i \in S^{C} \label{kkt:con3}.
\end{align}
By substituting equations \eqref{kkt:con1} and \eqref{kkt:con2} into \eqref{kkt:con3}, we can solve for the multiplier $\beta$ via 
\begin{equation}\label{prop2:linearsys}
\left(\text{diag}\left(\left\{\delta_i\right\}_{i \in S^{C}}\right) + \sum_{i \in I^{0}} \frac{1}{n_i}(L_i)_{S^{C}} (L_i)_{S^{C}}^{\top} \right) \beta = 2r. 
\end{equation}
Since the coefficient matrix is a diagonal matrix plus a rank $|I^{0}|$ term, its inverse can be computed efficiently as the sum of a diagonal matrix and rank $|I^{0}|$ matrix using the Woodbury matrix identity. 

Finally, we can use \eqref{kkt:con1}, \eqref{kkt:con2}, and \eqref{prop2:cond9+} to obtain $t_i$ for $i \in S^{C}$.
\end{proof}

\begin{remark}\label{rem:4}
If we decompose $Q$ as $Q = \sum_{i = 1}^{K} L_i L_i^{\top} + \text{diag}(\delta) + N$ such that $\delta \geq 0, N \succcurlyeq 0$, and $R = \sum_{i = 1}^{K} L_i L_i^{\top} + N$, then at any binary point $x \in X$, $\partial \bar{f}_{\text{persp}}(x) \subsetneqq \partial \bar{f}_{\text{persp+ro}}(x)$.
\end{remark}

This strict inclusion relation can be seen from the fact that if we set $\psi_i = 0$ for all $i \in I^{=} \bigcup I^{0}$ and $\phi_i = 0$ for all $i \in I^{0}$, then equations \eqref{prop2:cond8} and \eqref{prop2:cond9} are identical to equations \eqref{prop1:statement2} and \eqref{prop1:statement3}, and thus any $t$ feasible for the system \eqref{prop1:statement1}--\eqref{prop1:statement3} is also feasible for the system \eqref{prop2:cond7}--\eqref{prop2:cond11}.

\begin{remark}\label{rem:5}
If we decompose $Q$ as $Q = \sum_{i = 1}^{K} L_i L_i^{\top} + \text{diag}(\delta) + N$ such that $\delta \geq 0, N \succcurlyeq 0$, and $R = \sum_{i = 1}^{K} L_i L_i^{\top} + N$, then at any binary point $x \in X$, $\bar{f}_{\text{persp}}(x) = \bar{f}_{\text{persp+ro}}(x)$. For any two $t$ and $\tilde t$ returned by Algorithm ~\ref{alg:3} and Algorithm ~\ref{alg:2} respectively, we always have $\sum_{i \in S^{C}} \tilde t_i \geq \sum_{i \in S^{C}} t_i$.
\end{remark}

For any $i \in I^{>} \bigcup I^{=}$, the term $\max\left\{(L_i^{\top}y)^2, \frac{(L_i^{\top} y)^2}{\sum_{1 \leq j \leq n, L_{ji} \neq 0} x_j}\right\}$ equals $(L_i^{\top} y)^2$. For any $i \in I^{0}$, the fact $\sum_{1 \leq j \leq n, L_{ji} \neq 0} x_j = 0$ implies that $L_{ji} \neq 0$ only if $x_j = 0$. If there exists some $j \in S^{C}$ such that $y_j \neq 0$, then both $f_{\text{persp}}(x, y)$ and $f_{\text{persp+ro}}(x, y)$ will be $+\infty$ as they have the term $\delta_j \frac{y_j^2}{x_j}$. On the other hand, if $y_j = 0$ for all $j \in S^{C}$, then the term $\max\left\{(L_i^{\top}y)^2, \frac{(L_i^{\top} y)^2}{\sum_{1 \leq j \leq n, L_{ji} \neq 0} x_j}\right\}$ vanishes for any $i \in I^{0}$, and $y^{\top} (\sum_{i \in I^{>} \bigcup I^{=}} L_i L_i^{\top} + \text{diag}(\delta) + N) y = y^{\top} (\text{diag}(\delta) + R) y = y^{\top} Q y$. Overall, we have $f_{\text{persp}}(x, y) = f_{\text{persp+ro}}(x, y)$ for any $y \in \R^{n}$, and thus $\bar{f}_{\text{persp}}(x) = \bar{f}_{\text{persp+ro}}(x)$. The second part holds since by Remark ~\ref{rem:4}, any $t \in \partial \bar{f}_{\text{persp}}(x)$ is also a subgradient of $\bar{f}_{\text{persp+ro}}(x)$, and Algorithm ~\ref{alg:2} outputs the subgradient vector $\tilde t$ of $\bar{f}_{\text{persp+ro}}(x)$ such that $\sum_{i \in S^{C}} \tilde t_i$ is maximized. Remark ~\ref{rem:5} confirms the intuition that the tighter convex envelope $f_{\text{persp+ro}}$ leads to stronger cutting planes of $\mathcal{Z}$.

The computational cost of Algorithm ~\ref{alg:2} equals the cost of solving QP \eqref{prop1:QP} plus $\mathcal{O}(|I^{0}|^2n + |I^{0}|^3)$ floating-point operations (the cost of solving the linear system \eqref{prop2:linearsys}). Again, the procedure can be very efficient if we are at a binary point $x$ satisfying a strong sparsity constraint. On the other hand, we will only generate cuts at binary points when we utilize Algorithm ~\ref{alg:2} as the cut-generating procedure in an outer approximation algorithm. 

In the next section, we will demonstrate the effectiveness of the cutting planes in Proposition ~\ref{prop:1} and Proposition \ref{prop:2} on sparse portfolio selection problems with minimum and maximum investment constraints. We will benchmark the OA algorithms with Algorithm ~\ref{alg:3} and Algorithm ~\ref{alg:2} as the cut-generation procedure against the state-of-the-art methods in the literature. 

\section{Computational experiments}\label{sec:3}
In this section, we demonstrate the practical performance of the cutting planes in Proposition ~\ref{prop:1} and Proposition ~\ref{prop:2} through a computational study on the sparse portfolio selection problem. We solve the corresponding MIQP by an OA algorithm in which we iteratively refine the polyhedral approximation for $\mathcal Z$ using cutting planes in Proposition ~\ref{prop:1} and Proposition ~\ref{prop:2}. An outline of the OA algorithm is given in~Algorithm \ref{alg:4}. All the experiments are conducted on a laptop with a $12$th Generation Intel(R) Core(TM) i7-12700H CPU and 16 GB RAM. The quadratic programming subproblems are modeled by CVX 2.2 and solved using MOSEK 10.1.13, and the mixed-integer programming (MIP) master problem is solved by Gurobi 10.0.1. We adapt the default settings of Gurobi, except that the number of threads in use is forced to be one. The MIP solver terminates when a solution of relative optimality gap $< 0.01\%$ is found. All the algorithms and mathematical optimization models are implemented in Python.

\subsection{Outer approximation}
The OA algorithm is formalized in Algorithm ~\ref{alg:4}. 
\begin{theorem}\label{thm:2}
Algorithm ~\ref{alg:4} solves MIQP \eqref{eqn:MIQP} exactly in a finite number of iterations.
\end{theorem}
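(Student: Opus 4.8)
The plan is to run the classical outer-approximation convergence argument, adapted to the cut families of Propositions~\ref{prop:1} and~\ref{prop:2}. Two ingredients are needed: (i) at every iteration the master MILP is a valid relaxation of \eqref{eqn:MIQP} in the $(x,\eta)$ space, so that its optimal value $\mathrm{LB}$ is a lower bound on the optimal value of MIQP; and (ii) the iteration count is finite, because $X$ is a finite set and no binary configuration can be returned twice by the master without the optimality gap closing.

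For (i) I would argue as follows. Every cut appended by $\text{CUT}_{\text{persp}}$ (Algorithm~\ref{alg:3}) or $\text{CUT}_{\text{persp+ro}}$ (Algorithm~\ref{alg:2}) has the form $\tilde\eta \ge \langle t,\tilde x - \hat x\rangle + \bar f_{\text{env}}(\hat x)$ with $t\in\partial\bar f_{\text{env}}(\hat x)$ and $f_{\text{env}}\in\{f_{\text{persp}},f_{\text{persp+ro}}\}$ a closed convex envelope of $\mathcal Z$; by Lemma~\ref{lem:new} such an inequality is valid for $\clconv(\mathcal Z)\supseteq\mathcal Z$. Hence $\text{Proj}_{x,\eta}(\mathcal Z)$ is contained in the feasible region of the master at every iteration, so (once the master is initialized to be bounded, e.g.\ with one cut generated at a feasible point guaranteed by Assumption~\ref{assumption:1}) $\mathrm{LB}$ is well defined and does not exceed the optimal value of \eqref{eqn:MIQP}. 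Conversely, whenever the QP subproblem \eqref{prop1:QP} (resp.\ \eqref{prop2:QP}) at a binary $\hat x$ is feasible, its optimal solution $\hat y$, padded with zeros on $\hat S^{C}$, is feasible for \eqref{eqn:MIQP} and, by Remark~\ref{rem:6} together with Remark~\ref{rem:5}, has objective value exactly $\bar f_{\text{persp}}(\hat x)=\bar f_{\text{persp+ro}}(\hat x)$; this updates the incumbent and the upper bound $\mathrm{UB}$.

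For (ii), the key observation is that once the algorithm has \emph{processed} a binary point $\hat x\in X$ (solved its subproblem and appended the corresponding cut), every subsequent master solution $(x,\eta)$ with $x=\hat x$ satisfies $\eta\ge\bar f_{\text{persp}}(\hat x)$: substituting $\tilde x=\hat x$ into the appended cut gives exactly $\eta\ge\bar f_{\text{persp}}(\hat x)$. Since $X$ is finite and the master returns an element of $X$ at each iteration, after at most $|X|+1$ iterations it must return a point $\hat x$ that has already been processed. At that iteration $\mathrm{LB}=\eta^{*}\ge\bar f_{\text{persp}}(\hat x)$, while $\bar f_{\text{persp}}(\hat x)$ equals the objective value recorded when $\hat x$ was processed, so $\bar f_{\text{persp}}(\hat x)\ge\mathrm{UB}$; combined with $\mathrm{LB}\le\mathrm{UB}$ from (i) this forces $\mathrm{LB}=\mathrm{UB}$, and the current incumbent is an exact optimal solution of \eqref{eqn:MIQP}. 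As the number of iterations before this occurs is bounded by $|X|+1<\infty$, termination is finite.

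The step I expect to be the main obstacle is the treatment of binary points $\hat x$ for which the subproblem \eqref{prop1:QP} is \emph{infeasible}, so Proposition~\ref{prop:1} does not directly supply a cut; this happens precisely when no $y$ satisfies $A_{\hat S}y\le b$, $C_{\hat S}y\le D\hat x$, i.e.\ $\hat x$ is not part of any feasible point of \eqref{eqn:MIQP}. Algorithm~\ref{alg:4} must then append a feasibility (no-good) cut, such as $\sum_{i\in\hat S}(1-x_i)+\sum_{i\notin\hat S}x_i\ge 1$, and one has to verify that it is valid for $\text{Proj}_{x,\eta}(\mathcal Z)$ (immediate, since $\hat x\notin\text{Proj}_{x}(\mathcal Z)$), that it removes no optimal solution, and that it does not break the counting argument in (ii), since each such cut simply deletes one element of $X$. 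Assumption~\ref{assumption:1} ensures that at least one $x\in X$ remains feasible, so the master never becomes infeasible and the algorithm always returns a genuine optimal solution rather than declaring infeasibility. Once these bookkeeping points are settled, the argument above completes the proof.
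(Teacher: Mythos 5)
Your proposal is correct and follows essentially the same route as the paper: validity of the subgradient cuts (Lemma~\ref{lem:new}) makes the master a relaxation so its value never exceeds the optimum, the stopping criterion together with Remark~\ref{rem:6} (and Remark~\ref{rem:5} for the rank-one variant) shows the returned point attains the optimum, and finiteness follows from the finiteness of $X$ since a revisited binary point triggers termination. The only divergence is your treatment of binary points with an infeasible subproblem via no-good cuts, which the paper instead avoids by including a copy of the constraints $Ay\le b$, $Cy\le Dx$ in the initial relaxation $\mathcal{Z}_1$ so that \eqref{prop1:QP} is feasible whenever a cut is generated.
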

\begin{proof}
Let us denote the optimal value of MIQP \eqref{eqn:MIQP} as \textbf{OPT}. The finite convergence of Algorithm ~\ref{alg:4} follows from the fact that each binary solution is visited at most once, and only a finite number of cutting planes are added. Suppose Algorithm ~\ref{alg:4} returns a solution $(x^{T}, \eta^{T})$, then because $\mathcal{Z}_{t}$ contains $\mathcal{Z}$ at iteration $t$ (the cutting planes are valid for $\mathcal{Z})$, we have $\eta^{T} \leq \text{OPT}$. By the stopping criteria, we have $\eta^{T} \geq \bar{f}_{\text{persp}}(x^{T})$. Remark ~\ref{rem:6} tells that $\bar{f}_{\text{persp}}(x^{T})$ is the optimal value of MIQP when $x$ is fixed to $x^{T}$, and hence upper bounds \textbf{OPT}. Putting these together, we have $\eta^{T} = \text{OPT}$.
\end{proof}

Instead of rebuilding a branch-and-bound tree each time a violated cut is added, we adapt the single-tree strategy through the \textit{lazy constraints}. This means that the cutting planes are added to the MIP solver when an integer solution is found, and it violates some cutting plane. The solver terminates when no more violated cutting planes are added, and the current integer solution is optimal. 

\subsubsection{Incumbent solutions:} High-quality incumbent solutions help prune the branch-and-bound nodes and save efforts on exploring suboptimal regions. In the literature, heuristics exist under some particular constraints of \eqref{eqn:MIQP}, i.e., \cite{xie2020scalable, Bertsimas2016, bertsimas2022scalable}; however, there does not exist a heuristic that can generate provably near-optimal solutions for MIQP in its general form. Another strategy is providing the visited binary points $(x^t, \bar{f}_{\text{persp}}(x^t))$ as incumbent solutions to the MIP solver (we have established in Theorem ~\ref{thm:2} that $\bar{f}_{\text{persp}}(x^{t})$ upper bounds the optimal solution). Implementing this strategy takes no additional cost to the OA procedure as the computation of $\bar{f}_{\text{persp}}(x^t)$ is included in the cut generation, and we can set incumbent solutions through a \textbf{user callback} function for commercial solvers such as CPLEX and Gurobi. 

\subsubsection{Copy variables:} We add a copy of the constraints $A y \leq b$ and $C y \leq D x$ to the initial polyhedral approximation $\mathcal{Z}_{1}$, i.e., $Z_{1} = \{(x,y,\eta):A y \leq b, C y \leq D x, x \in \conv(X)\}$. Doing so guarantees that the quadratic programming subproblem \eqref{prop1:QP} is always feasible when we generate cutting planes. 

\subsubsection{Improve the lower bound:} Note that the perspective relaxation provides a valid lower bound on the optimal value of the MIQP \eqref{eqn:MIQP}.  We denote the optimal value of the perspective relaxation as $\eta_{\text{persp}}$; then we can add the inequality $\eta \geq \eta_{\text{persp}}$ to the MIP solver. In some scenarios, we must obtain a high-quality solution before the MIP solver terminates. A near-optimal lower bound, such as $\eta_{\text{persp}}$, will help close the optimality gap of the best integer solution found. A stronger relaxation lower bound can be obtained via SDP relaxations, i.e., \cite{dong2015regularization, zheng2014improving, atamturk2019rank, HGA:2x2}, and we do not exploit these options due to the high computational cost of solving SDPs. 

\subsubsection{Decomposition:} In terms of the diagonal decomposition, i.e., $Q = R + \text{diag}(d)$ such that $R \succcurlyeq 0$ and $d \geq 0$, several approaches exist. For example, \citet{Frangioni2007} propose solving a simple SDP while \citet{zheng2014improving}
propose solving a larger-sized SDP, which seeks the diagonal decomposition such that the lower bound of the resulting perspective relaxation is maximized. \citet{zheng2014improving} also test the convex combination of those two. In our experiments, the decomposition method in \cite{Frangioni2007} attains a better solution time for almost all the instances and solution methods. Thus, we only present the results using the method in \cite{Frangioni2007} to extract a diagonal term. The rank-one decomposition is obtained by doing Cholesky decomposition on the remainder term $R$, i.e., $Q = \sum_{i = 1}^{K} L_i L_i^{\top} + \text{diag}(\delta)$ such that $\sum_{i = 1}^{K} L_i L_i^{\top} = R$.

\begin{algorithm}[H]
\caption{An outer approximation algorithm (OA)} \label{alg:4}
\begin{algorithmic}[1]
\STATE Set $t = 1$
\STATE Set $\mathcal Z_{1} = \{(x,y,\eta):Ay \leq b, Cy \leq Dx, x \in \conv(X), \eta \geq \eta_{\text{persp}} \}$ 
\WHILE{$\eta^t < \bar{f}_{\text{persp}}(x^t)$}
\STATE Compute the solution $(x^t, \eta^t)$ of
$$\min_{x \in X, \eta} \quad \eta \quad \text{s.t.} \quad (x, y, \eta) \in \mathcal{Z}_{t} \; \text{for some} \; y$$
\STATE Add a cut $t$ using Algorithm ~\ref{alg:3} or \ref{alg:2}
$$
\mathcal{Z}_{t} = \mathcal{Z}_{t} \bigcap \{(x,y,  \eta) : \eta \geq \langle t, x - x^t \rangle + \bar{f}_{\text{persp}}(x^t) \}
$$
\STATE Set $t = t + 1$
\ENDWHILE
\end{algorithmic}
\end{algorithm}
\subsection{Portfolio selection problems} Given a universe of $n$ risky assets, we denote the mean return vector as $\mu$ and the covariance matrix as $Q$. The mean-variance portfolio selection problem with sparsity, minimum investment, maximum investment, and minimum return constraints can be formulated as follows:
\begin{subequations}\label{eqn:portfolio}
\begin{align}
\min_{x, y} \quad & y^{\top} Q y \\
\text{s.t.} \quad & \sum_{i = 1}^{n} y_i = 1 \\
                  & \mu^{\top} y \geq \rho  \\
                  & \alpha_i x_i \leq y_i \leq u_i x_i, \quad \forall i \in [n]  \\
                  & \sum_{i = 1}^{n} x_i \leq k \\
                  & x \in \{0,1\}^{n}. 
\end{align}
\end{subequations}

The random instances we use are generated the same way as the diagonal-dominant instances in \cite{Frangioni2007}. The expected return ($\mu_i$), minimum investment ($\alpha_i$), and maximum investment ($u_i$) are drawn from uniform distributions with ranges [0.002, 0.01], [0.075, 0.125], and [0.375, 0.425], for each $i=1,\dots,n$,  respectively. The minimum return value $\rho$ is also from a uniform distribution over the interval [0.002, 0.01]. The off-diagonal entries of $Q$ are drawn from a discrete uniform distribution between 1 and 10, and the diagonal entries are drawn from a discrete uniform distribution between $10n$ and $20n$. Since $\alpha_i \geq 0.075$, the number of nonzeros in the vector $y$ is at most 13. We test with cardinality constraints when $k = 6, 8, 10$ and the case without a cardinality constraint, which we denote by `nc'. For $n = 300, 400$, we reuse these publicly available instances at \url{https://commalab.di.unipi.it/datasets/MV/} for benchmarking, and for larger problem sizes, which are not available in the public data set, we generate new random instances following the same scheme.

We compare the following solution methods.

\begin{itemize}
\itemsep 1em
  \item  MISOCP: the mixed-integer second-order conic reformulation of the perspective reformulation of \eqref{eqn:portfolio} solved by Gurobi. 

\item OA-BC: the OA method proposed by \citet{bertsimas2022scalable}.

\item OA-persp: Algorithm ~\ref{alg:4} with Algorithm ~\ref{alg:3} as the cut generating subroutine.
\item  OA-persp+ro: Algorithm ~\ref{alg:4} with Algorithm ~\ref{alg:2} as the cut generating subroutine.
\end{itemize}

Note that the original OA algorithm in \cite{bertsimas2022scalable} is implemented in Julia, and here, we re-implement it in Python for a fair comparison with the other methods implemented in Python. Authors in \cite{bertsimas2022scalable} also supply additional cuts at the root node to strengthen the original polyhedral approximation of $\mathcal Z$. We also test the OA algorithm with additional cuts generated by an in-out bundle method at the root node; however, although this implementation improves the solution time in the OA procedure, it takes more time overall because the cost of generating additional cuts is substantial. Another difference in the experiment settings is the choice of MIP solver. In \cite{bertsimas2022scalable, zheng2014improving, Frangioni2007}, the  MIP solver in use is CPLEX, whereas here we use Gurobi.

In Table ~\ref{tab:1}, we present the comparison between MISOCP, OA-BC, OA-persp, and OA-persp+ro, and each entry represents an average of ten instances. A time limit of ten minutes is enforced, and the parenthesis followed by the solution time indicates the number of instances that hit the time limit before reaching an optimal solution. The method that attains the best solution time is shown in bold. We provide instance-wise performance in Appendix \ref{app:suppl} for a more detailed comparison.

By comparing Table~\ref{tab:1} with the tables in \cite{bertsimas2022scalable} (the most recent computational study on the sparse portfolio selection problem), we discover that with the recent developments of Gurobi in mixed-integer conic programming, we are now able to explore more than twice as many nodes as CPLEX version 12.8.0.\ run on a machine with a better configuration within the same amount of time. Also, the comparison between MISOCP and OA-BC is reversed in some instances (MISOCP now attains a better end gap than OA-BC in some instances when they both reach the time limit). Consistent with the findings in \cite{bertsimas2022scalable}, MISOCP performs worst in the case where there is no explicit sparsity constraint but an implicit sparsity constraint because of the minimum investment constraints, and its performance significantly deteriorates (with an end gap $\approx 10\%$) when the problem size scales up to 500. This phenomenon suggests that OA methods are better for the sparse portfolio selection problem in large instances.

 Another observation we make is that OA-BC hits the time limit in most instances.  An even closer look at Table ~\ref{tab:1} and the tables in \cite{bertsimas2022scalable} reveals that for both the original implementation of OA-BC and our re-implementation, it takes roughly the same amount of branch-and-bound nodes and cutting planes to obtain an optimal solution; however, on our computing environment, we are only able to explore less than half of the branch-and-bound nodes within ten minutes.

 As we have shown, the cuts generated by OA-BC and OA-persp are the same, but the cut-generating procedure in the latter one is much more efficient (it solves a QP subproblem of size $k$ as opposed to the QP subproblem of size $n$ in OA-BC). The computational results further confirm that OA-persp significantly outperforms OA-BC. OA-persp solves most instances within the time limit, and its improved efficiency against OA-BC allows the exploration of more branch-and-bound nodes and the generation of more cuts. For many instances, both the smaller ones ($n = 300$) and the larger ones ($n = 500$), OA-persp reduces the solution time by one-half or more (this reduction is underestimated as OA-BC fails to solve a large proportion of instances).

Finally, even though OA-persp is the winning algorithm in most cases, the performances of OA-perp and OA-persp+ro tie closely (with only a few instances in which OA-persp+ro takes approximately $10\%$ extra time than OA-persp). Although OA-persp+ro generates stronger cuts than OA-persp, and accordingly maintains a stronger linear programming relaxation, the additional cost of solving the linear system in line 16 of Algorithm ~\ref{alg:2} may slow down the branch-and-bound process. One trend we can observe from Table ~\ref{tab:1} is that the gap between the solution times of OA-persp and OA-persp+ro diminishes as the problem size scales up.

\begin{landscape}
\tiny
\begin{table}[th]
    \centering
    \caption{Comparison results for portfolio selection problems with minimum/maximum return and investment constraints.}
    \vskip 1mm
    \label{tab:1}\begin{tabular}{cccccccccccccccccccc}
    \hline
     & & \multicolumn{3}{c}{MISOCP} & & \multicolumn{4}{c}{OA-BC}  & & \multicolumn{4}{c}{OA-persp} & & \multicolumn{4}{c}{OA-persp+ro} \\
     \cline{3-5} \cline{7-10} \cline{12-15} \cline{17-20}
     Size & $k$ & Gap(\%) & Time(s) & Node &  & Gap & Time & Node & Cut &  &Gap & Time & Node & Cut & & Gap & Time & Node & Cut \\
300 & 6   &  0.13 & 413.57(3) & 561   &  & 0.12 & 360.43(3) & 6337   & 847  &   & 0.00 & \textbf{31.60}  & 7330   & 945  & & 0.00 & 33.62  & 7193   & 938  \\
    & 8  &   0.16 & 470.77(5) & 571   &  & 0.23 & 435.59(4) & 10760  & 1081 &  &  0.00 & \textbf{75.52}   & 18485    & 1566 & & 0.00& 81.36  & 18191 & 1551 \\
    & 10 &   0.11 & 466.51(4) & 571.  &  & 0.32 & 463.79(6)   & 17337  & 1081 &  & 0.09 & \textbf{250.33(2)} & 43794  & 2144 & & 0.09 & 286.12(2) & 47414  & 2092   \\
    & nc &   1,81 & 600.14(10) & 533   &  & 0.12 & 231.94(2) & 250993 & 239  &  & 0.00 & 139.36 & 278212 & 251  & & 0.00 & \textbf{134.59}  & 247410 & 253  \\
400 & 6  &   0.53 & 519.15(8) & 523  &  & 0.47 & 405.62(6) & 5447   & 750  &  & 0.00 & \textbf{72.24}  & 11766    & 1699& & 0.00 & 86.53 & 11205  & 1700 \\
    & 8  &   0.39 & 508.30(8) & 521  &  & 0.41 & 431.89(6) & 5869   & 780  &  & 0.00 & \textbf{225.35} & 30389 & 2855 & &0.00 & 247.60 & 30171 & 2829 \\
    & 10 &   0.20 & 504.27(8) & 518  &  & 0.26 & 442.80(6) & 13698  & 743    &  & 0.08 & \textbf{253.89(3)} & 38300  & 1621 & & 0.08 & 264.43(3) & 41238  & 1587 \\
    & nc &   1.49 & 540.83(9) & 503   &  & 0.41 & 276.14(4) & 421272 & 104  &  & 0.37 & 250.31(4) & 473113 & 106  & &0.41 & \textbf{250.14(4)} & 478389   & 107  \\
     500 & 6 & 0.73 

& 	531.00(9) &482.57 & &  0.34 	
& 451.93(8) & 5283  &638 &   & 0.00
 & \textbf{111} & 15414 &  1843
 & &0.02	
 & 129.28(1) &16171 & 1807   \\
         & 8 & 9.93 
 & 534.05(9) & 466 & & 0.32 
& 487.36(6) & 6130  &649 &   & 0.07	
& \textbf{167.37(1)} & 26360 & 2227 &   & 0.07 
 & 190.01(1) & 28313  & 2145\\
         & 10 & 9.95 
& 516.10(9) & 529 & & 0.37 
& 516.44(9) & 8534 &648 &  & 0.11 
& 349.92(2) & 54403 &1918  &   & 0.11	
 & \textbf{347.95(2)} & 59129 & 1910\\ 
         & nc & 14.22 
& 515.54(9) & 441 & & 1.12 	
& 391.93(7) & 270996 & 161 &   & 1.05 
& \textbf{352(4)} & 363384 & 184  & & 0.93	
  & 357.82(4) &	380651  & 161\\
      \hline
    \end{tabular}
    
\end{table}
\end{landscape}
\normalsize

\section{Conclusion}\label{sec:6}

In this paper, we consider constrained mixed-integer convex quadratic programs (MIQP) with indicators. The motivation for this work arises from a sequence of convexification results on MIQP and the scalability of outer approximation algorithms. We develop a framework to derive projective cutting planes for MIQP, and as special cases, we derive the cutting planes based on perspective reformulation and perspective reformulation with rank-one inequalities. Our approach has some apparent advantages over existing ones. First, we can handle different constraints (potentially nonlinear constraints) by representing them as indicator functions in the objective as long as we can readily compute the resulting subdifferentials. Second, our result applies to general mixed-integer convex programs (MICP) with indicators. Third, an explicit form of the marginal function is not required to derive cutting planes.  Last, it can be easily adapted to other strong reformulations in the literature. The computational results in \S \ref{sec:6} show that the theory translates into promising algorithmic improvements. 

The theoretical framework is not restricted to the problem formulation (MIQP) we study here. One possible extension is applying the framework to other MICP problems, i.e., when the objective is separable convex or the composite of a convex function and a linear mapping.  It is also interesting for future research to study the cutting planes for problems with nonlinear constraints. A theoretical question to answer is, can we relax to the level set boundedness condition? By answering it, we can obtain cutting planes for a broader class of functions. From a computational perspective, an idea for improving the algorithm is to exploit the sparsity of the Hessian. For example, intuitively, the rank-one inequality \eqref{eqn:ro} is stronger when it involves a small proportion of variables, and we can use the sparse Cholesky decomposition to decompose the Hessian into sparse rank-one quadratics.

\section*{Acknowledgement}
This research is supported, in part, by NSF Grant 2007814.

\bibliographystyle{abbrvnat}
\bibliography{reference}


\appendix
\section{Equivalence of Cutting Planes} \label{app:equiv}
Following \citet{bertsimas2022scalable}, and for ease of exposition, we choose $\text{diag}(\delta) = \frac{1}{\gamma} I$ for some $\gamma >0$ (the proof of  the general $\delta$ case is similar). It's easy to verify that the level set boundedness condition holds for $f_{\text{persp}_{2}}$, and by mimicking the steps of the proof of Proposition ~\ref{prop:1}, we conclude that $t \in \partial \bar f_{\text{persp}_2}(x)$ at some $x$ if and only if
\begin{subequations}
\begin{align}
& \exists y \in \R^{n}, \alpha \in \R^{k_1}, \lambda \in \R^{m_1}, \mu \in \R^{m_2}\label{sparsereg:const1}\\
& y_i = 0 \quad \forall i \in S^{C} \\
& r = \beta - E  y \label{sparsereg:const2}\\
& A  y \leq b \label{sparsereg:const3}\\
& C  y \leq D x \label{sparsereg:const4}\\
& \lambda \geq 0, \mu \geq 0 \label{sparsereg:const5}\\
& \lambda_{i} (a_{i}^{\top}  y - b_i) = 0, \quad \quad\forall i \in [m_1] \label{sparsereg:const6}\\
& \mu_{i}( c_{i}^{\top}  y - d_{i}^{\top} x) = 0, \quad \forall i \in [m_2] \label{sparsereg:const7}\\
& 2 \text{diag}\left(\left\{\frac{1}{\gamma x_i}\right\}_{i \in S} \right)  y_{S} + E_{S}^{\top} \alpha + A_{S}^{\top} \lambda + C_{S}^{\top} \mu + \tilde{g}_{S} = 0 \label{sparsereg:const8}\\
& 2 r + \alpha = 0 \label{sparsereg:const9}\\
& t_i = - \frac{1}{\gamma} \frac{ y_i^2}{x_i^2}  - D_{i}^{\top} \mu +c_i \quad\forall i \in S \label{sparsereg:const10}\\
& t_i \leq -\frac{\gamma}{4}(X_i^{\top} \alpha + A_{i}^{\top} \lambda + C_{i}^{\top} \mu + g_i)^2 - D_{i}^{\top} \mu + c_i. \quad \forall i \in S^{C}.\label{sparsereg:const11}
\end{align}
\end{subequations}
Conditions \eqref{sparsereg:const1}--\eqref{sparsereg:const9} are the KKT conditions for the following regression problem in a reduced space:
\begin{align}\label{sparsereg:QP}
\min_{y} \quad & \sum_{i \in S} \frac{1}{\gamma} \frac{y_i^2}{x_i} + \|r\|_{2}^{2} + \tilde{g}_{S}^{\top} y \\
\text{s.t.} \quad & r = \beta - E_{S} y \nonumber\\
& A_{S} y \leq b \nonumber\\
& C_{S} y \leq D x\nonumber,
\end{align}
and $ y_{S}, \alpha, \lambda,$ and  $\mu$ are the optimal primal and dual solutions. By comparing with Theorem ~1 in \cite{bertsimas2022scalable}, we can see that $\partial \bar f_{\text{persp}_{2}}$ attains the same set of cutting planes.

\section{Supplementary computational results} \label{app:suppl}

We give instance-wise comparisons for the instances generated by \citet{Frangioni2007}. 
\begin{landscape}
\scriptsize
\begin{table}[!th]\label{tab:appendix1}
    \caption{Instance-wise comparison results for portfolio selection problems when $n = 300$ .}
    \centering
    \begin{tabular}{cccccccccccccccccccc}
    \hline
     & & \multicolumn{3}{c}{MISOCP} & & \multicolumn{4}{c}{OA-BC}  & & \multicolumn{4}{c}{OA-persp} & & \multicolumn{4}{c}{OA-persp+ro} \\ 
   \cline{3-5} \cline{7-10} \cline{12-15} \cline{17-20} 
     Problem & $k$ & Gap & Time & Node & & Gap & Time & Node & Cut & &Gap & Time & Node & Cut & & Gap & Time & Node & Cut\\
pard300-0 & 6  & 0.16 & 600.04 & 654 & &  0.45 & 600.38 & 9427    & 1513& & 0.00 & 63.75  & 14554   & 1992 && 0.00 & 79.19  & 16502  & 1993  \\
pard300-0 & 8  & 0.30 & 600.07 & 635 & & 0.65 & 600.01 & 10587   & 1503 && 0.00 & 190.04 & 39244   & 3466 && 0.00 & 202.97 & 36332  & 3416   \\
pard300-0 & 10 & 0.14 & 600.02 & 564 && 0.45 & 600.61 & 20672   & 1358 && 0.00 & 233.07 & 54078   & 2314 && 0.00 & 387.61 & 48568  & 2263   \\
pard300-0 & nc & 0.48 & 600.18 & 498 && 0.64 & 600.70 & 508165  & 393  && 0.00 & 387.09 & 609246  & 475  && 0.00 & 471.19 & 634535 & 472  \\
\hline
pard300-1 & 6  & 0.00 & 358.01 & 551 && 0.14 & 600.34 & 7244    & 891  && 0.00 & 51.50  & 6644    & 915  && 0.00 & 31.37  & 6003   & 925    \\
pard300-1 & 8  & 0.06 & 600.07 & 628 && 0.36 & 600.34 & 14410   & 1493 && 0.00 & 80.85  & 21515   & 1878 && 0.00 & 90.29  & 21973  & 1829  \\
pard300-1 & 10 & 0.00 & 486.69 & 622 && 0.22 & 600.21 & 30051   & 1354 && 0.00 & 129.66 & 39900   & 1510& & 0.00 & 164.98 & 52168  & 1468 \\
pard300-1 & nc & 0.17 & 600.11 & 577 && 0.00 & 107.59 & 237566  & 116  && 0.00 & 67.04  & 252109  & 117  && 0.00 & 64.63  & 253601 & 117  \\
\hline
pard300-2 & 6  & 0.37 & 600.10 & 518 && 0.00 & 460.12 & 12199   & 1160 && 0.00 & 36.85  & 12538   & 1149 && 0.00 & 41.39  & 11514  & 1155  \\
pard300-2 & 8  & 0.20 & 600.19 & 475 && 0.00 & 585.66 & 29930   & 1400 && 0.00 & 76.23  & 29629   & 1380 && 0.00 & 80.21  & 29450  & 1398  \\
pard300-2 & 10 & 0.32 & 600.05 & 480 && 0.62 & 600.00 & 20871   & 1411 && 0.18 & 600.00 & 96071   & 3649 && 0.19 & 600.03 & 99064  & 3455  \\
pard300-2 & nc & 0.82 & 600.15 & 442 && 0.00 & 182.56 & 120650  & 288  && 0.00 & 59.91  & 96237   & 272  && 0.00 & 56.76  & 99555  & 285  \\
\hline
pard300-3 & 6  & 0.61 & 600.04 & 727 && 0.57 & 600.25 & 12023   & 1511 && 0.00 & 69.45  & 17349   & 1963 && 0.00 & 72.77  & 15152  & 1921 \\
pard300-3 & 8  & 0.46 & 600.07 & 704 && 0.76 & 600.01 & 12290   & 1495 && 0.00 & 190.47 & 38768   & 3198 && 0.00 & 199.00 & 38904  & 3178  \\
pard300-3 & 10 & 0.27 & 600.16 & 712 && 0.64 & 600.29 & 15660   & 1460 && 0.00 & 362.65 & 53598   & 3738 && 0.00 & 437.73 & 64013  & 3740  \\
pard300-3 & nc & 8.16 & 600.27 & 577 && 0.00 & 135.29 & 77614   & 275  && 0.00 & 35.35  & 77204   & 276  && 0.00 & 34.21  & 74277  & 282  \\
\hline
pard300-4 & 6  & 0.00 & 220.47 & 537 && 0.00 & 166.65 & 3011    & 423  && 0.00 & 11.48  & 3074    & 447  && 0.00 & 11.66  & 2827   & 429  \\
pard300-4 & 8  & 0.00 & 150.93 & 504 && 0.00 & 201.27 & 4923    & 504  && 0.00 & 14.78  & 4841    & 494  && 0.00 & 14.93  & 5097   & 490   \\
pard300-4 & 10 & 0.00 & 313.88 & 563 && 0.00 & 310.42 & 18959   & 712  && 0.00 & 36.47  & 12623   & 748  && 0.00 & 42.76  & 15618  & 759  \\
pard300-4 & nc & 0.64 & 600.15 & 593 && 0.00 & 50.10  & 46204   & 83   && 0.00 & 13.68  & 39924   & 78   && 0.00 & 11.22  & 33926  & 86  \\
\hline
pard300-5 & 6  & 0.00 & 134.41 & 521 && 0.00 & 173.29 & 2965    & 437  && 0.00 & 11.49  & 3050    & 428  && 0.00 & 12.16  & 2934   & 418   \\
pard300-5 & 8  & 0.00 & 265.28 & 574 && 0.00 & 301.53 & 6647    & 757  && 0.00 & 23.44  & 7598    & 745  && 0.00 & 24.93  & 7533   & 756    \\
pard300-5 & 10 & 0.39 & 600.06 & 718 && 0.77 & 600.26 & 19129   & 1437 && 0.69 & 600.01 & 63240   & 5211 && 0.66 & 600.01 & 69370  & 4972  \\
pard300-5 & nc & 0.97 & 600.04 & 606 && 0.00 & 183.41 & 211273  & 266  && 0.00 & 70.42  & 213307  & 270  && 0.00 & 87.83  & 205647 & 258   \\
\hline
pard300-6 & 6  & 0.13 & 600.04 & 469 && 0.00 & 133.93 & 1813    & 335  && 0.00 & 8.67   & 1721    & 330  && 0.00 & 10.68  & 1561   & 331   \\
pard300-6 & 8  & 0.26 & 600.10 & 459 && 0.00 & 129.34 & 2246    & 318  && 0.00 & 9.00   & 1904    & 289  && 0.00 & 9.88   & 2193   & 292  \\
pard300-6 & 10 & 0.00 & 307.04 & 365 && 0.00 & 91.63  & 2168    & 216  && 0.00 & 10.60  & 2750    & 261  && 0.00 & 9.80   & 3004   & 221  \\
pard300-6 & nc & 2.43 & 600.27 & 396 && 0.00 & 21.63  & 3455    & 41   && 0.00 & 3.65   & 3985    & 45   && 0.00 & 3.32   & 2400   & 42     \\
\hline
pard300-7 & 6  & 0.00 & 270.95 & 576 && 0.00 & 244.85 & 3842    & 620  && 0.00 & 16.59  & 3469    & 629  && 0.00 & 19.60  & 3899   & 613   \\
pard300-7 & 8  & 0.00 & 462.82 & 653 && 0.00 & 418.45 & 8075    & 1049 && 0.00 & 32.32  & 7947    & 1046 && 0.00 & 37.14  & 11121  & 1035  \\
pard300-7 & 10 & 0.00 & 317.12 & 579 && 0.00 & 430.79 & 16710   & 1004 && 0.00 & 71.12  & 25051   & 1041 && 0.00 & 65.78  & 20430  & 1006  \\
pard300-7 & nc & 1.26 & 600.14 & 613 && 0.00 & 243.62 & 115781  & 298  && 0.00 & 85.81  & 73289   & 308  && 0.00 & 117.56 & 98849  & 315   \\
\hline
pard300-8 & 6  & 0.00 & 500.10 & 568 && 0.00 & 404.61 & 7295    & 1025 && 0.00 & 31.58  & 7667    & 1046 && 0.00 & 38.90  & 7913   & 1032  \\
pard300-8 & 8  & 0.30 & 600.02 & 599 && 0.50 & 600.36 & 12524   & 1497 && 0.00 & 112.81 & 27362   & 2333 && 0.00 & 127.49 & 23651  & 2333  \\
pard300-8 & 10 & 0.00 & 568.35 & 572 && 0.44 & 600.31 & 24616   & 1364 && 0.00 & 442.80 & 85912   & 2473 && 0.00 & 534.75 & 98009  & 2552  \\
pard300-8 & nc & 1.66 & 600.13 & 486 && 0.00 & 194.54 & 164052  & 316  && 0.00 & 80.78  & 165395  & 315  && 0.00 & 68.98  & 144705 & 332  \\
\hline
pard300-9 & 6  & 0.00 & 251.57 & 490 && 0.00 & 219.89 & 3555    & 561  && 0.00 & 14.66  & 3240    & 560  && 0.00 & 18.45  & 3632   & 568   \\
pard300-9 & 8  & 0.00 & 228.12 & 482 && 0.00 & 318.92 & 5977    & 800  && 0.00 & 25.26  & 6042    & 835  && 0.00 & 26.72  & 5659   & 785   \\
pard300-9 & 10 & 0.00 & 271.77 & 538 && 0.00 & 203.33 & 4540    & 501  && 0.00 & 16.89  & 4721    & 496  && 0.00 & 17.75  & 3904   & 484    \\
pard300-9 & nc & 1.49 & 600.00 & 551 && 0.56 & 600.01 & 1025175 & 316  && 0.00 & 589.91 & 1251427 & 356  && 0.00 & 430.20 & 926608 & 344 

    \end{tabular}
\end{table}
\normalsize
\end{landscape}

\begin{landscape}
\scriptsize
\begin{table}[!th]\label{tab:appendix2}
    \caption{Instance-wise comparison results for portfolio selection problems when $n = 400$.}
    \centering
    \begin{tabular}{cccccccccccccccccccc}
    \hline
     & & \multicolumn{3}{c}{MISOCP} & & \multicolumn{4}{c}{OA-BC}  & & \multicolumn{4}{c}{OA-persp} & & \multicolumn{4}{c}{OA-persp+ro} \\ 
   \cline{3-5} \cline{7-10} \cline{12-15} \cline{17-20} 
     Problem & $k$ & Gap & Time & Node & & Gap & Time & Node & Cut & &Gap & Time & Node & Cut & & Gap & Time & Node & Cut\\
pard400-0 & 6  & 0.67 & 600.23 & 625 &  & 0.85 & 600.33 & 5162    & 1126 &  & 0.00 & 114.78 & 16102   & 2789 &  & 0.00 & 129.38 & 15852   & 2776 \\
pard400-0 & 8  & 0.53 & 600.13 & 691 &  & 0.69 & 600.35 & 7677    & 1109 &  & 0.00 & 390.52 & 47432   & 4873 &  & 000  & 411.78 & 51595   & 4781 \\
pard400-0 & 10 & 0.09 & 600.20 & 587 &  & 0.00 & 575.93 & 34205   & 930  &  & 0.00 & 68.89  & 20167   & 893  &  & 0.00 & 106.13 & 32837   & 943  \\
pard400-0 & nc & 2.79 & 600.15 & 601 &  & 1.23 & 600.01 & 820944  & 202  &  & 1.14 & 600.00 & 1066345 & 221  &  & 1.19 & 600.01 & 899022  & 250  \\
\hline
pard400-1 & 6  & 0.00 & 28.46  & 15  &  & 0.00 & 14.59  & 51      & 21   &  & 0.00 & 0.88   & 49      & 21   &  & 0.00 & 0.81   & 69      & 19   \\
pard400-1 & 8  & 0.00 & 67.75  & 31  &  & 0.00 & 15.71  & 26      & 20   &  & 0.00 & 0.86   & 89      & 20   &  & 0.00 & 1.08   & 167     & 19   \\
pard400-1 & 10 & 0.00 & 91.25  & 45  &  & 0.00 & 10.36  & 97      & 12   &  & 0.00 & 0.92   & 126     & 15   &  & 0.00 & 0.84   & 103     & 15   \\
pard400-1 & nc & 0.00 & 5.21   & 1   &  & 0.00 & 18.19  & 449     & 23   &  & 0.00 & 0.97   & 378     & 17   &  & 0.00 & 1.02   & 364     & 16   \\
\hline
pard400-2 & 6  & 1.09 & 600.01 & 330 &  & 0.00 & 87.69  & 1038    & 157  &  & 0.00 & 4.56   & 896     & 163  &  & 0.00 & 4.85   & 931     & 167  \\
pard400-2 & 8  & 0.58 & 600.19 & 327 &  & 0.00 & 161.98 & 2080    & 278  &  & 0.00 & 9.68   & 2268    & 263  &  & 0.00 & 9.84   & 1947    & 279  \\
pard400-2 & 10 & 0.14 & 600.24 & 332 &  & 0.00 & 208.95 & 10134   & 336  &  & 0.00 & 21.00  & 7510    & 314  &  & 0.00 & 19.44  & 7014    & 304  \\
pard400-2 & nc & 0.31 & 600.31 & 307 &  & 0.00 & 26.04  & 1616    & 32   &  & 0.00 & 2.97   & 1793    & 22   &  & 0.00 & 2.62   & 1071    & 24   \\
\hline
pard400-3 & 6  & 0.37 & 600.37 & 559 &  & 0.52 & 600.30 & 8111    & 1117 &  & 0.00 & 63.02  & 13294   & 1723 &  & 0.00 & 64.94  & 12359   & 1724 \\
pard400-3 & 8  & 0.33 & 600.32 & 521 &  & 0.57 & 600.02 & 7347    & 1087 &  & 0.00 & 251.99 & 38879   & 3672 &  & 0.00 & 250.50 & 36804   & 3631 \\
pard400-3 & 10 & 0.43 & 600.27 & 549 &  & 0.60 & 600.47 & 10242   & 1015 &  & 0.49 & 600.08 & 55783   & 3727 &  & 0.50 & 600.02 & 52198   & 3530 \\
pard400-3 & nc & 1.16 & 600.24 & 530 &  & 0.00 & 101.38 & 34868   & 115  &  & 0.00 & 22.26  & 22862   & 118  &  & 0.00 & 38.40  & 39031   & 121  \\
\hline
pard400-4 & 6  & 0.39 & 600.25 & 715 &  & 0.00 & 335.62 & 5146    & 627  &  & 0.00 & 21.17  & 4993    & 646  &  & 0.00 & 25.37  & 5081    & 648  \\
pard400-4 & 8  & 0.32 & 600.22 & 722 &  & 0.00 & 526.71 & 10625   & 951  &  & 0.00 & 40.43  & 11851   & 973  &  & 0.00 & 42.31  & 9458    & 965  \\
pard400-4 & 10 & 0.29 & 600.00 & 713 &  & 0.44 & 600.33 & 18136   & 1009 &  & 0.00 & 304.55 & 65101   & 2084 &  & 0.00 & 320.14 & 70781   & 2111 \\
pard400-4 & nc & 3.44 & 600.46 & 673 &  & 1.09 & 600.00 & 773944  & 233  &  & 1.09 & 600.03 & 836874  & 247  &  & 1.17 & 600.02 & 902718  & 228  \\
\hline
pard400-5 & 6  & 0.00 & 361.13 & 317 &  & 0.00 & 16.54  & 39      & 25   &  & 0.00 & 0.92   & 65      & 24   &  & 0.00 & 2.01   & 56      & 24   \\
pard400-5 & 8  & 0.00 & 213.83 & 295 &  & 0.00 & 12.74  & 55      & 14   &  & 0.00 & 0.77   & 31      & 14   &  & 0.00 & 1.42   & 34      & 14   \\
pard400-5 & 10 & 0.00 & 150.25 & 297 &  & 0.00 & 30.91  & 243     & 48   &  & 0.00 & 1.69   & 328     & 41   &  & 0.00 & 3.90   & 499     & 61   \\
pard400-5 & nc & 1.33 & 600.02 & 394 &  & 0.00 & 43.53  & 2240    & 56   &  & 0.00 & 7.34   & 5186    & 68   &  & 0.00 & 7.87   & 3325    & 66   \\
\hline
pard400-6 & 6  & 0.66 & 600.22 & 719 &  & 0.73 & 600.16 & 7635    & 1122 &  & 0.00 & 104.90 & 14965   & 2559 &  & 0.00 & 168.34 & 12845   & 2532 \\
pard400-6 & 8  & 0.50 & 600.19 & 716 &  & 0.65 & 600.56 & 6986    & 1113 &  & 0.00 & 219.46 & 34701   & 3380 &  & 0.00 & 296.05 & 31447   & 3350 \\
pard400-6 & 10 & 0.12 & 600.17 & 738 &  & 0.22 & 600.15 & 15369   & 1051 &  & 0.00 & 74.9   & 20279   & 1264 &  & 0.00 & 109.27 & 22383   & 1182 \\
pard400-6 & nc & 1.92 & 600.37 & 715 &  & 0.82 & 600.02 & 1304288 & 78   &  & 0.54 & 600.01 & 1421540 & 82   &  & 0.73 & 600.01 & 1566134 & 76   \\
\hline
pard400-7 & 6  & 1.07 & 600.25 & 488 &  & 1.00 & 600.13 & 6699    & 1098 &  & 0.00 & 147.22 & 21138   & 3189 &  & 0.00 & 157.47 & 19730   & 3234 \\
pard400-7 & 8  & 0.66 & 600.32 & 432 &  & 0.86 & 600.47 & 7015    & 1077 &  & 0.00 & 457.43 & 52436   & 5581 &  & 0.00 & 510.10 & 55535   & 5492 \\
pard400-7 & 10 & 0.36 & 600.00 & 460 &  & 0.49 & 600.59 & 20688   & 993  &  & 0.00 & 266.85 & 60777   & 2109 &  & 0.00 & 284.53 & 70818   & 2060 \\
pard400-7 & nc & 1.22 & 600.52 & 423 &  & 0.00 & 119.40 & 44938   & 105  &  & 0.00 & 64.92  & 98069   & 94   &  & 0.00 & 46.19  & 36017   & 101  \\
\hline
pard400-8 & 6  & 0.44 & 600.34 & 746 &  & 0.71 & 600.42 & 13874   & 1116 &  & 0.00 & 94.72  & 19385   & 2380 &  & 0.00 & 129.77 & 17848   & 2396 \\
pard400-8 & 8  & 0.48 & 600.03 & 741 &  & 0.62 & 600.03 & 9496    & 1103 &  & 0.00 & 386.84 & 62479   & 4231 &  & 0.00 & 428.76 & 61514   & 4246 \\
pard400-8 & 10 & 0.33 & 600.01 & 733 &  & 0.45 & 600.25 & 16468   & 1040 &  & 0.17 & 600.02 & 80269   & 3012 &  & 0.10 & 600.01 & 75529   & 2998 \\
pard400-8 & nc & 0.45 & 600.68 & 699 &  & 0.00 & 52.82  & 1867    & 74   &  & 0.00 & 4.54   & 1825    & 67   &  & 0.00 & 5.23   & 1744    & 67   \\
\hline
pard400-9 & 6  & 0.63 & 600.21 & 721 &  & 0.85 & 600.39 & 6717    & 1095 &  & 0.00 & 170.26 & 26773   & 3505 &  & 0.00 & 182.38 & 27287   & 3482 \\
pard400-9 & 8  & 0.51 & 600.00 & 740 &  & 0.68 & 600.29 & 7388    & 1051 &  & 0.00 & 495.54 & 53731   & 5548 &  & 0.00 & 524.17 & 53215   & 5515 \\
pard400-9 & 10 & 0.21 & 600.35 & 733 &  & 0.38 & 600.02 & 11406   & 996  &  & 0.11 & 600.03 & 72666   & 2755 &  & 0.17 & 600.02 & 80224   & 2675 \\
pard400-9 & nc & 2.31 & 600.34 & 694 &  & 0.99 & 600.02 & 1227572 & 128  &  & 0.92 & 600.01 & 1276260 & 127  &  & 0.95 & 600.02 & 1334464 & 129 

    \end{tabular}
\end{table}
\normalsize
\end{landscape}
\end{document}